\documentclass[11pt,English]{amsart}

\usepackage{amsmath} %
\usepackage{graphicx}

\usepackage{amsthm}
\usepackage{amsfonts}
\usepackage{amssymb}
\usepackage{mathtools}
\usepackage[hidelinks]{hyperref}
\usepackage[capitalize]{cleveref}

\usepackage{bookmark}
\usepackage[numbers,square]{natbib}
\usepackage{tikz}
\usetikzlibrary{calc,arrows.meta,bending,positioning,decorations.pathreplacing}

\usepackage{ifthen}

\DeclareMathAlphabet{\mymathbb}{U}{BOONDOX-ds}{m}{n}

\crefname{sec}{Section}{Sections}

\newcommand{\Rb}{\mathbb{R}}
\newcommand{\Nb}{\mathbb{N}}
\newcommand{\Qb}{\mathbb{Q}}
\newcommand{\Zb}{\mathbb{Z}}

\newcommand{\Dc}{\mathcal{D}}

\newcommand{\ZF}{\ifmmode\mathsf{ZF}\else$\mathsf{ZF}$\fi}
\newcommand{\ZFC}{\ifmmode\mathsf{ZFC}\else$\mathsf{ZFC}$\fi}

\newcommand{\e}{\varepsilon}

\def\james{first author}%

\theoremstyle{theorem}
\newtheorem{theorem}{Theorem}[section]

\newtheorem{thm}[theorem]{Theorem}
\newtheorem{prop}[theorem]{Proposition}
\newtheorem{lem}[theorem]{Lemma}

\newtheorem{cor}[theorem]{Corollary}
\newtheorem{fact}[theorem]{Fact}

\theoremstyle{definition}
\newtheorem{definition}[theorem]{Definition}
\newtheorem{pseudodefinition}[theorem]{Pseudodefinition}

\theoremstyle{remark}

\usepackage{fullpage}

\title{A formula for any real number, maybe}%
\author{James E. Hanson and Connor Watson}

\begin{document}

\maketitle

\begin{abstract}
  We discuss how to write down three specific natural numbers $A$, $B$, $C$ such that for any real number $r$ you've probably ever thought of, it is consistent with \ZFC{} set theory that $r = \log(\sup_{x_0,x_1 \in \Rb} \inf_{x_2 \in \Rb} \sup_{x_3 \in \Rb}\allowbreak\inf_{x_4 \in \Rb}\sup_{m \in \Nb}\inf_{n_0,\dots,n_{A} \in \Nb} x^2_0 [(n_0 - 2)^2  +  \allowbreak(n_1-m)^2 + n_2  + (n_B - n_C)^2 + n_3 \sum_{k=0}^4 ( x_k - \frac{n_{k+5}}{1+n_4} + n_4)^2  + \sum_{i,j = 0}^B (n_{9+2^i3^j} - n_i^{n_j})^2])$. We also discuss why it's possible, assuming the existence of certain large cardinals, for there to be a real number $s$ which cannot be the value of this formula for our particular $A$, $B$, $C$. This involves set-theoretic mice.

\end{abstract}

\section*{Introduction}

Familiar mathematical constants are often defined or computed in terms of some infinitary process (such as a limit or integral) applied to a firmly concrete, finitary formula involving only rational numbers. Many of these expressions can be rewritten in terms of infima and suprema, such as in the following:
\begin{align*}
e &= \sup_{n \in \Nb} \left( 1 + \frac{1}{n+1} \right)^{n+1} \\
\pi &= 4\inf_{n \in \Nb} \sum_{k=1}^{2n+1} \frac{(-1)^k}{2k+1} \\
\gamma &= \sup_{n \in \Nb} \inf_{m \in \Nb}\left(  \sum_{k=0}^n \frac{1}{k+1} - \sum_{\ell=0}^m\frac{n}{m + \ell n + 1} \right)
\end{align*}
Formulas such as these are so banal in the context of mathematics that it's easy to think that they represent absolutely specific quantities. It would be absurd to suggest that a statement like $\sup_{n \in \Nb} ( 1 + \frac{1}{n+1} )^{n+1} \leq 3$ might be unresolvable by the ordinary methods of mathematical reasoning. Nevertheless, there are statements of the same form which do end up being nebulous in this sense. Using G\"odel's incompleteness theorem and the Matiyasevich--Robinson--Davis--Putnam theorem and some related results, for any given reasonably strong system of axioms, it's possible to show that there are natural numbers $D$, $E$, and $F$ such that the value of
\begin{equation}
  \inf_{n_0,\dots,n_D \in \Nb}
 \left[ %
      (n_0-2)^2  %
      + (n_E - n_F)^2
      + \sum_{i,j = 0}^E (n_{2^i3^j} - n_i^{n_j})^2 %
\right]
   \tag{$\star$}
\end{equation}
 is independent of the given axioms.\footnote{This can be proven with an argument very similar to the proof of \cref{lem:indic}.}

This paper discusses a similar formula whose value is in some sense `even more independent' than the value of $(\star)$. Specifically, we are going to show that for some particular choice of natural numbers $A$, $B$, and $C$, the formula
\begin{equation}
  \sup_{x_0,x_1 \in \Rb} \inf_{x_2 \in \Rb} \sup_{x_3 \in \Rb}\inf_{x_4 \in \Rb}\sup_{m \in \Nb}\inf_{n_0,\dots,n_{A} \in \Nb} x^2_0 \left[
    \begin{matrix*}[l]
      \phantom{+}(n_0 - 2)^2  +  (n_1-m)^2\\
      + n_2 + (n_B - n_C)^2\\
      + n_3 \sum_{k=0}^4 ( x_k - \frac{n_{k+5}}{1+n_4} +n_4)^2 \\
      + \sum_{i,j = 0}^B (n_{9+2^i3^j} - n_i^{n_j})^2
    \end{matrix*}
    \right] \tag{$\dagger$}
  \end{equation}
can consistently be any reasonably simple non-negative real number. Although, as we'll see, the relevant notion of `reasonable simplicity' is actually extremely broad, encompassing, for instance, every real number described by some entry on the \emph{On-Line Encyclopedia of Integer Sequences} and all but two or three real numbers with their own \emph{Wikipeidia} articles. It includes all computable real numbers and many non-computable real numbers such as Chaitin's constant $\Omega$, the halting oracle $0'$, and $\sum_{n =1}^\infty \frac{1}{\Sigma(n)}$, where $\Sigma(n)$ is the busy beaver function. It covers, broadly speaking, every real number that one can prove the existence of in \ZFC{} set theory.

But what do we mean when we say that the behavior of $(\dagger)$ is `more independent' than the behavior of $(\star)$? There are actually two senses in which this is true. The easier sense is just that the set of values $(\dagger)$ can take on is larger. Depending on how a formula like $(\star)$ is constructed, it may only consistently take on a small finite number of values, possibly only two.

The subtler sense in which the value of $(\dagger)$ is more independent than the value of $(\star)$ has to do with the nature of independence results. One fairly robust way of conceptualizing independence results is the `multiverse perspective,' originally pioneered by Hamkins in \cite{HAMKINS2012}. An axiomatic system like Peano arithmetic or Zermelo--Frankel set theory are analogous to the group axioms or the field axioms in that they specify a \emph{kind} of object, rather than a single object. Different models of \ZFC{} can be thought of as different `mathematical universes' between which certain mathematical facts may differ. And, in particular, in order to see different values of the formula $(\star)$, one must go to mathematical universes that are drastically different. In order for the value of $(\star)$ in universe $U_0$ to be different from the value of $(\star)$ in universe $U_1$, it needs to be the case that $U_0$ and $U_1$ disagree about what the natural numbers are.\footnote{The existence of such $U_0$ and $U_1$ follows from the combination of G\"odel's incompleteness theorem and G\"odel's completeness theorem.} For instance it might be the case that $\Nb$ in $U_0$ is a strict initial segment of $\Nb$ in $U_1$. By contrast, the value of $(\dagger)$ can be changed by merely passing to a universe with a different set of real numbers. This is important for formalizing the central claim of this paper because if $U_0$ and $U_1$ don't agree about what the natural numbers are, then real numbers in $U_0$ and real numbers in $U_1$ aren't even the same kind of object. $U_0$ and $U_1$ both have things that they think of as the real number $\pi$, since you can prove that such a number exists in \ZFC{}, but $\pi^{U_1}$ might have digits indexed by natural numbers that don't even exist in $U_0$.

Set theory tends to focus on what are called \emph{well-founded} models of \ZFC{}, which are rich enough to capture a wide variety of phenomena, but tractable enough to study in depth. Well-founded models of \ZFC{} are somewhat analogous to Noetherian rings, both in the role they play in their respective fields, but also on the technical level that they are defined by a finiteness condition. A model of \ZFC{} is well-founded if it does not have an (external) infinite descending sequence of set membership:
\[
x_0 \ni x_1 \ni x_2 \ni x_3 \ni x_4 \ni x_4 \ni \dots
\]
 Well-founded models agree about what the natural numbers are, but may have different sets of real numbers. This is second sense in which $(\dagger)$ is `more independent' than $(\star)$; the value of $(\dagger)$ varies more than the value of $(\star)$ and it does so over a the narrower, well-behaved class of well-founded models, which leave the value of $(\star)$ fixed.

\subsection{How could you possibly prove that?}%

The proof of our main theorem is very similar to previous work of the \james{} author in \cite{AnyFunction}, in which it was shown that a similar expression (albeit one involving only polynomials with integer coefficients) yields a function whose Lebesgue measurability is independent of \ZFC{}. This paper is intended to be vaguely self-contained, but it would be a disservice to not highlight the relationship between it and \cite{AnyFunction}.  The general shape of both arguments involves three components: 
\begin{enumerate}
  \item\label{set-thy} A set-theoretic argument that there is some `computably definable' subset of Cantor space, $2^\omega$, with some desired sensitivity to set-theoretic assumptions (where $\omega$ is set theory notation for the set of natural numbers).
\item\label{des-set-thy} A standard argument using effective descriptive set theory that this can be translated to a `computably definable' subset of $\Rb$ or $\Rb^k$. 
\item\label{num-thy} A (lightly number-theoretic) argument that this definition can be implemented by some (relatively) simple mathematical expression using ideas similar to those in the negative resolution of Hilbert's tenth problem.
\end{enumerate}

The notion of `computable definability' here is in the sense of effective descriptive set theory: Say that an open subset $U$ of $\Rb^k$ is \emph{computably open} if there is a computer program that outputs a list of rational vectors $\vec{x}_n \in \Rb^k$ and rational radii $r_n > 0$ such that $U = \bigcup_{n \in \Nb} B_{r_n}(\vec{x}_n)$ (where $B_{r}(\vec{x})$ is the open ball of radius $r$ centered at $\vec{x}$). Say that a sequence $(U_m)_{m \in \Nb}$ of open sets is \emph{uniformly computably open} (or a \emph{uniformly computable} sequence of open sets) if there is a computer program that takes a natural number $m$ as input and then prints lists $\vec{x}_{n,m}$ and $r_{n,m}$ as before such that $U_m = \bigcup_{n \in \Nb}B_{r_{n,m}}(\vec{x}_{n,m})$. Finally, say that a $G_\delta$ subset $X \subseteq \Rb^k$ is \emph{computably $G_\delta$} if there is a uniformly computably open sequence $(U_m)_{m \in \Nb}$ such that $X = \bigcap_{m \in \mathbb{N}} U_m$.

From here we can define a hierarchy of sets built using projections and complements (where $\pi_{\Rb^k}$ is the projection to $\Rb^k$):
\begin{itemize}
\item A \emph{$\Sigma^1_1$ set} is a set of the form $\pi_{\Rb^k}(X)$, where $X \subseteq \Rb^{k+1}$ is computably $G_\delta$.
\item A \emph{$\Pi^1_n$ set} is the complement of a $\Sigma^1_n$ set.
\item For $n > 1$, a \emph{$\Sigma^1_n$} set is a set of the form $\pi_{\Rb^k}(Y)$, where $Y \subseteq \Rb^{k+1}$ is $\Pi^1_n$.
\item A \emph{projective\footnote{Strictly speaking what we are describing here are the `lightface' projective sets. Normally the term `projective' refers to a larger class of sets called the `boldface' projective sets, but for the sake of simplicity, we won't discuss these.} set} is a set that is $\Sigma^1_n$ or $\Pi^1_n$ for some $n$.
\end{itemize}
Now, these definitions may seem weirdly specific, but these classes of sets are actually fairly robust (for instance, each is closed under finite unions and intersections as well as `computable' countable unions and intersections) and moreover we claim (without proof) that the vast majority of subsets of $\Rb^k$ that are conventionally thought of as `explicitly defined' (specifically without using the axiom of choice) are in this hierarchy (and indeed are almost always $\Sigma^1_1$).

In the rough outline given above, we started with Cantor space ($2^\omega$) in step~\ref{set-thy} but in the previous paragraph we only discussed $\Rb$. This is because, from the point of view of set theory and (effective) descriptive set theory, $2^\omega$ and $\Rb$ are essentially isomorphic.\footnote{So much so that set theorists will regularly refer to elements of $2^\omega$ as `reals' or `real numbers.'} Specifically, one can reasonably define an analogous hierarchy of sets (e.g., computably open/$G_\delta$, $\Sigma^1_n$, $\Pi^1_n$) for $2^\omega$ and then show that there is a bijection $f : \Rb \to 2^\omega$ with the property that any set $X \subseteq \Rb$ is $\Sigma^1_n$ if and only if the image $f(X)$ is $\Sigma^1_n$ (and so clearly the same holds for $\Pi^1_n$).\footnote{$f$ does not preserve computable openness, because this would imply that $\Rb$ and $2^\omega$ are homeomorphic, which they are not.} The existence of this isomorphism is essentially the content of step~\ref{des-set-thy}.

The key set-theoretic fact that allows for the main result of \cite{AnyFunction} is that it is consistent with \ZFC{} that there is a well-ordering $<_L$ of $2^\omega$ such that $\{(x,y) \in (2^\omega)^2 : x <_L y\}$ is both a $\Sigma^1_2$ set and a $\Pi^1_2$ set. This was a seminal result of G\"odel in the 1930s, which he showed by defining the universe of `constructible' sets, $L$, which in a certain sense is a minimal model of \ZFC{}. $L$ and related models of \ZFC{} also play a pivotal role in the main result of this paper, and we will discuss them in more depth in \cref{sec:encoding}.

As mentioned before, different models of \ZFC{} will in general have different sets of real numbers. (When building $L$, for instance, it's entirely possible that a real number might just not show up in $L_\alpha$ for any $\alpha$.) Suppose that we have a computer program $P$ that we are using to define a $\Sigma^1_n$ subset of $\Rb$ and suppose we have two universes of sets (i.e., models of \ZFC{}), $U_0$ and $U_1$. If we use $P$ to define a set $X_0$ of reals in $U_0$, there is no guarantee that this set will be especially similar to the set $X_1$ of reals in $U_1$ also defined by $P$.

To illustrate this idea, let's pretend for a moment that we can build a model $U_0$ of \ZFC{} in which\footnote{To be clear, this is not at all possible.} $\Rb = \Qb$ and an ordinary model $U_1$ in which $\Qb$ is a proper subset of $\Rb$. Given a reasonable enumeration $(q_i)_{i \in \Nb}$ of $\Qb$, it is fairly trivial to write down a computer program $P$ witnessing that $((-\infty,q_i)\cup (q_i,\infty))_{i \in \Nb}$ is a uniformly computably open sequence. If we now use $P$ as a specification of a computably $G_\delta$ set, it will evaluate to the empty set in $U_0$ and to the very much non-empty set of irrational numbers in $U_1$.

This may seem like a silly example, but it's not that far off from something that does actually happen with models of \ZFC{}. There is a fixed computer program $P$ that defines the set $\Rb \cap L$ (i.e., the set of real numbers that exist inside $L$) as a $\Sigma^1_2$ set in any model of \ZFC{}. Considering the complement of this, we now have a recipe for a $\Pi^1_2$ set which is empty in $L$ but non-empty in models $U$ of \ZFC{} in which $\Rb$ is a proper superset of $\Rb \cap L$.

Another example (which is related to concepts that will show up later in \cref{sec:any-real}), has to do with certain specific real numbers that cannot exist inside $L$. There are two ways for a real number to avoid being an element of $L$: It can be `too random' or it can `know too much.' Forcing can add real numbers that are `random' or `generic' (although the specific details depend a lot on the kind of forcing in question), but this means that the produced real numbers added by this are not specific. 

The real numbers that `know too much,' by contrast, can be very specific. The simplest example of this is $0^\sharp$ (read `zero sharp'), which is a real number that in some sense codes a `blueprint' for building $L$. Specifically, this real number allows one to compute a truth predicate for $L$, which means that if it were an element of $L$, $L$ would be able to define its own truth predicate, contradicting Tarski's undefinability theorem. Since $0^\sharp$ is very specific, it can be described in a projective way. In particular, there is a $\Pi^1_2$ definition which yields the set $\{0^\sharp\}$ if $0^\sharp$ exists and the set $\varnothing$ otherwise.

\section{\texorpdfstring{Encoding a real number into $\Rb$ itself}{Encoding a real number into ℝ itself}}
\label[sec]{sec:encoding}

For the purpose of the main result of this paper, we need to show a certain set-theoretic fact, namely that it is possible to `definably encode' a given real number $r \in \Rb \cap L$ into the set of reals $\Rb \cap M$ in a specially constructed well-founded model $M$ of \ZFC{}.

The idea of defining a real number with the property that the real number which it defines is independent of \ZFC{} is not a new one, and it is not particularly difficult if you are willing to relax your definition of `defining.' For example, consider the following definition of a real number $r\in 2^{\omega}$:
$$r(n)=1\iff 2^{\aleph_n}=\aleph_{n+1}.$$

By Easton's theorem, for any set $X\subseteq\Nb,$ there is a model of \ZFC{} in which the GCH holds at exactly those $\aleph_n$'s for which $n\in X.$ Hence, not only is the real number that it defines independent of \ZFC{}, but it can consistently be any real number. We say that we have ``coded $r$ in the continuum function." %

This, of course, feels a bit cheap. One reason it might feel that way is that this method appeals to the nature of cardinal arithmetic, which certainly does not define a projective singleton. The main idea behind the proof of our main theorem is to code the bits of the real number $r$ just as we did with continnum function, but to code it into a different structure which can be done in a projective way. We use the structure of $L$ to do this.

The relationship between $L$ and other well-founded models of \ZFC{} is analogous to the relationship between $\Qb$ and other fields of characteristic $0$ in that $L$ has a unique embedding into any other well-founded model of \ZFC{} (with the same ordinal numbers). $L$ is defined by a transfinite inductive construction inspired in some way by the \emph{cumulative hierarchy}:
\begin{itemize}
\item $V_0 = \varnothing$.
\item $V_{\alpha + 1} = \mathcal{P}(V_\alpha)$, where $\mathcal{P}(X)$ is the power set of $X$.
\item $V_{\lambda} = \bigcup_{\alpha < \lambda}V_\alpha$ for $\lambda$ a limit ordinal.
\item $V=\bigcup_{\alpha\in\mathrm{On}} V_\alpha$, where $\mathrm{On}$ is the class of ordinal numbers.
\end{itemize}
This is the maximal construction of this form, in that it grabs the full power set at each stage. G\"odel's original idea with defining $L$ was to define a `minimal' construction of a model of \ZFC{}, adding sets as conservatively as possible:
\begin{itemize}
\item $L_0 = \varnothing$.
\item $L_{\alpha + 1}$ is the set of all first-order definable subsets of $L_\alpha$.
\item $L_{\lambda} = \bigcup_{\alpha < \lambda} L_\alpha$ for $\lambda$ a limit ordinal.
\item $L=\bigcup_{\alpha\in\mathrm{On}} L_\alpha.$
\end{itemize}
It's clear from this definition that a model of \ZFC{} needs to be closed under these operations, since it satisfies the axiom scheme of separation, but the surprising thing is that these operations are also enough to build a model of \ZFC{}. In fact the resulting model is \emph{absolute} in the sense that (in the context of well-founded models) $L$ is uniquely determined by the length of its class of ordinals (which can vary, as we'll discuss in \cref{sec:what-is-large}). $L$ is highly structured, and a lot more can be said about it than about arbitrary models of \ZFC{}. This is a big part of what makes it possible to prove the main result of this paper (and similarly the main result of \cite{AnyFunction}).

A major topic in modern set theory is studying generalizations of this construction to build other \emph{inner models}. Perhaps the easiest generalization is $L[X]$.

\begin{definition}\label[definition]{defn:L-X}
  Given a set $X$, $L_\alpha[X]$ and $L[X]$ are defined in the same way as $L_\alpha$ and $L$, respectively, except for the following:
  \begin{itemize}
  \item $L_{\alpha + 1}[X]$ is the set of all subsets of $L_\alpha[X]$ that are first-order definable using $X$ as a parameter.
  \end{itemize}
\end{definition}

Although $X$ can be any set in this definition, it is often a real number. Returning to the metaphor with fields, just as how the field extension $\Qb(\sqrt{2})$ is the unique smallest field containing $\sqrt{2}$, when $x$ is a real number or an element of $2^\omega$, $L[x]$ is the smallest model of \ZFC{} containing the ordinals and $x$. %

An important aspect of the construction of $L[x]$ is that it is compatible with the notion of definability discussed earlier when restricted to the real numbers.

\begin{fact}\label[fact]{fact:x-L-y}
  The relation $x\in L[y]$ is a $\Sigma^1_2$ subset of $\mathbb{R}^2$ \cite[Thm.~13.9]{Kanamori2003}.
\end{fact}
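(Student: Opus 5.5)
The plan is to follow the classical argument that $\{(x,y) : x \in L[y]\}$ is $\Sigma^1_2$. The key reduction is to countable models: $x \in L[y]$ if and only if there is a countable ordinal $\alpha$ with $x \in L_\alpha[y]$. This holds because any real in $L[y]$ already appears at a countable stage, by a L\"owenheim--Skolem and condensation argument for $L[y]$. We then express the existence of such a stage by quantifying over a real that codes a countable structure.

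Concretely, we claim that $x \in L[y]$ if and only if there is a $z \in 2^\omega$ satisfying three conditions:
\begin{enumerate}
\item $z$ codes a binary relation $E_z$ on $\omega$, and the structure $(\omega, E_z)$ satisfies a fixed finite fragment $T$ of \ZF{} together with the sentence ``$V = L[\dot y]$,'' where $\dot y$ is interpreted as the element of the model whose $n$th member corresponds to $y(n)$.
\item $(\omega,E_z)$ is well-founded.
\item Some element of $(\omega,E_z)$ represents $x$.
\end{enumerate}

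Conditions (1) and (3) are arithmetical in $(x,y,z)$. The satisfaction relation for a countable coded structure is $\Delta^1_1$, and for a fixed finite theory it is actually arithmetical. Identifying elements that represent natural numbers, and the real $y$ or $x$, is a first-order condition on $z$ together with the given reals. Condition (2) is $\Pi^1_1$: it says that every $f \in \omega^\omega$ (coded as a real) fails to be an $E_z$-descending sequence. So the right-hand side has the form $\exists z\,[\text{arithmetical} \wedge \Pi^1_1]$, which is $\Sigma^1_2$. Translating between $2^\omega$ and $\Rb$ is handled by the isomorphism described in step~\ref{des-set-thy}.

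The correctness of the equivalence is the main step.

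\emph{Backward direction.} If $(\omega,E_z)$ is well-founded, we take its Mostowski collapse, a transitive set $N$ that satisfies $T$ plus $V = L[y]$. Absoluteness of the $L[y]$ construction for transitive models of $T$ gives $N = L_\alpha[y]$ for some $\alpha$. Hence $x \in N \subseteq L[y]$.

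\emph{Forward direction.} Given $x \in L[y]$, we choose $\beta$ with $x \in L_\beta[y]$ and $L_\beta[y] \models T$, using reflection. We then take a countable elementary substructure containing $x$, $y$ and $\omega \cup \{\omega\}$. Its collapse is some $L_{\bar\beta}[y]$ by condensation, and it still contains $x$ and $y$ because reals are fixed by the collapse. Enumerating this countable set yields the required code $z$.

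I expect the main obstacle to be choosing $T$ so that both absoluteness and condensation go through. We need enough of the theory for ``$V = L[y]$'' to be absolute for transitive models, for example by including the axiom that every set lies in some $L_\gamma[y]$ together with enough replacement to carry out the construction. Rather than redo this, I would cite the standard treatment, which is how the paper handles the fact, via \cite[Thm.~13.9]{Kanamori2003}.
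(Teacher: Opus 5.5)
Your proposal is correct and is essentially the standard argument behind \cite[Thm.~13.9]{Kanamori2003}, which the paper cites rather than proves. You show that $x\in L[y]$ iff some real $z$ codes a well-founded model (a $\Pi^1_1$ condition) of a suitable theory characterizing the levels $L_\delta[y]$ that contains $x$ (an arithmetical condition), which gives a $\Sigma^1_2$ definition; a small simplification is that the forward direction does not need condensation, since a countable elementary substructure of $L_\beta[y]$ containing $x$ and $y$ is already well-founded and contains all of $\omega$, so you can enumerate it directly to obtain $z$.
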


The relation $x \in L[y]$ is interesting in its own right in that it means that $y$ `contains all of the information contained in $x$' in some precise sense. Let us define a preorder on $\Rb$ by saying that for real numbers $x$ and $y$, $$x\leq_c y\iff x\in L[y].$$ Say that $x\equiv_c y$ if and only if $x\leq_c y$ and $y\leq_c x$. This can be shown to define an equivalence relation on $\mathbb{R}$, and so we may form the quotient of $\mathbb{R}$ by this equivalence relation. These equivalence classes are called \textit{constructibility degrees}, or \textit{$c$-degrees}. The partial order of $c$-degrees in a model of \ZFC{} is analogous (although not perfectly so) to the lattice of subfields of a field $K$. The $c$-degrees can also be thought of as a set-theoretic version of oracle computability, and in fact the notation $0^\sharp$ was meant to be similar to the computability-theoretic notation for the halting oracle, $0'$. And just like the halting oracle, there is a relativized version, $x^\sharp$, which has the same relationship to $L[x]$ that $0^\sharp$ has to $L$. In particular, $x^\sharp$ cannot be an element of $L[x]$ and so the $c$-degree of $x^\sharp$ is strictly larger than the $c$-degree of $x$.

It is the structure of the $c$-degrees which we exploit to define the real number in question. As it turns out, just as the GCH pattern can be forced to be almost whatever we would like in different models of \ZFC{}, the exact structure of the $c$-degrees can be quite malleable, too. This is made precise by the following theorem of Adamowicz, which essentially states that the lattice of $c$-degrees can consistently be any well-founded upper semilattice with a top element, provided that the lattice is `reasonably simple' (i.e., lives in $L$):

\begin{fact}[Adamowicz \cite{Adamowicz1977}]\label[fact]{fact:Adamowicz}
    Let $\mathcal{D}$ be a well-founded upper semilattice in $L$ with a top element $\mymathbb{1}.$ Then there is a forcing extension $L[G]$ of $L$ in which the lattice of $c$-degrees of $L[G]$ are isomorphic in $L[G]$ to $\mathcal{D}$ enlarged by a bottom element $\mymathbb{0}.$
\end{fact}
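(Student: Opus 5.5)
This is Adamowicz's theorem, so the plan is a forcing argument carried out over $L$. The goal is to build $L[G]$ so that its $c$-degrees are exactly $\mymathbb{0}$ (the degree of the constructible reals) together with one new degree for each element of $\Dc$, ordered as in $\Dc$. Since $\Dc \in L$, fix in $L$ an enumeration of $\Dc$, its order $\leq_\Dc$ and its join. Then design a notion of forcing $\mathbb{P} \in L$ that adds a family of generic reals $(g_d)_{d \in \Dc}$. The intended correspondences are:
\begin{itemize}
\item $g_d$ is constructible from $g_e$ exactly when $d \leq_\Dc e$;
\item $g_{d \vee e} \equiv_c \langle g_d, g_e\rangle$;
\item $g_{\mymathbb{1}} \equiv_c G$, so that $L[G] = L[g_{\mymathbb{1}}]$.
\end{itemize}

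For the construction I would use a Sacks-type forcing with perfect trees. The conditions are systems of perfect trees indexed by $\Dc$, linked so that for $d \leq e$ the branch through the $e$-tree computes the branch through the $d$-tree in a uniform way, for instance by projecting or coding selected coordinates. When $\Dc$ is finite this is essentially the iterated or product perfect-set forcing with tree-coding used for minimal degrees. For general well-founded $\Dc$ I would proceed by induction on the rank of $\Dc$, using fusion arguments to show that $\mathbb{P}$ preserves $\omega_1$, so that cardinals and the continuum are controlled and the relevant fusion sequences stay in $L$.

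The order in which I would carry it out is as follows.
\begin{enumerate}
\item Show that if $d \leq_\Dc e$ then $g_d \in L[g_e]$. This follows from the coding built into the conditions.
\item Show $g_{d\vee e} \in L[g_d, g_e]$, and conversely, again from the coding.
\item Prove the key \emph{minimality/homogeneity lemma}. For every real $x \in L[G]$ there is a downward-closed ideal, in fact principal, $I$ with $x \equiv_c \langle g_d : d \in I\rangle$, and if $x \notin L$ then $I$ is generated by a single $d$.
\item Deduce that every nonconstructible real has the degree of some $g_d$, that $g_d \notin L[g_e]$ when $d \not\leq e$, and that constructible reals form the bottom degree.
\item Conclude that the degree structure of $L[G]$ is $\Dc$ with a new bottom element $\mymathbb{0}$. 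Because the map $d \mapsto [g_d]_c$ is definable in $L[G]$ from $G$, the isomorphism exists in $L[G]$.
\end{enumerate}

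The main obstacle is step 3, the analysis of an arbitrary real $x = \tau^G$ for a name $\tau$. The standard Sacks fusion argument shows that below any condition we can strengthen to one where $\tau$ either is decided constructibly on that part of the tree, or is continuously one-to-one in some coordinates $g_d$ and thus computes them. With many coordinates, I would first try the following. Thin the condition, via a fusion over a countable (in $L$) set of coordinates, until the dependence of $\tau$ is captured by a set $S$ of indices on which $\tau$ is injective, with $\tau$ independent of the others. Then let $I$ be the ideal generated by $S$, and use that $\Dc$ is an upper semilattice with top to make it principal, generated by $\bigvee S$. Well-foundedness of $\Dc$ is what makes this induction on rank and the fusion terminate. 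Given this lemma, step 4 is routine.
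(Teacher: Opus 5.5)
The paper does not prove this Fact. It imports it from Adamowicz and applies it, in \cref{thm:main-set-theory}, only to a $\Dc$ for which $\{\mymathbb{0}\}+\Dc$ is distributive and every infinite subset has supremum $\mymathbb{1}$; neither problem below arises there. As a proof of the general statement, however, your proposal has a genuine gap, and it sits at step 3.

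\textbf{First gap: infinite dependence sets.} The passage from the dependence set $S$ to the principal ideal generated by $\bigvee S$ only works for finite $S$. A top element gives an upper bound, not a least one, and a name can depend on infinitely many indices. This is not a technicality. Let $\Dc=\omega\cup\{a,b,\mymathbb{1}\}$, where $\omega$ is a chain, $a,b$ are incomparable and above every $n$, and $\mymathbb{1}=a\vee b$. This is a well-founded upper semilattice with top.

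Apply your step 3 to $x=g_a$ and $x=g_b$; here $I$ is necessarily generated by $a$, resp.\ $b$. It asserts $g_a\equiv_c\langle g_e:e\le a\rangle$ and $g_b\equiv_c\langle g_e:e\le b\rangle$, and linking maps fixed uniformly in $L$ would give the same. Then the real $y$ coding $\langle g_n:n<\omega\rangle$ lies in $L[g_a]\cap L[g_b]$. So $[y]_c$ is above every $[g_n]_c$ and below both $[g_a]_c$ and $[g_b]_c$, and no element of $\{\mymathbb{0}\}+\Dc$ sits there.

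For this $\Dc$, your key lemma and your uniform linking therefore contradict the conclusion you want. In any realization, at least one of $L[g_a]$, $L[g_b]$ must contain every $g_n$ without containing the sequence. So the way $g_a$ constructs the $g_n$ cannot be uniform in $n$. The correct target is only that each non-constructible real is $\equiv_c g_d$ for some $d$.

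\textbf{Second gap: non-distributive $\Dc$.} Even for finite $\Dc$, your mechanism (thin until $\tau$ depends injectively on a set $S$ of indices and not on the others) is the canonization argument for products or iterations of Sacks reals. It cannot produce non-distributive configurations. Take $\Dc=\{a,b,c,\mymathbb{1}\}$ with $a,b,c$ pairwise incomparable, so $\{\mymathbb{0}\}+\Dc$ is $M_3$. Suppose each $g_d$ is the tuple $\langle c_j:j\in A_d\rangle$ of some mutually generic coordinates $c_j$.
\begin{itemize}
\item $A_a,A_b,A_c$ must be pairwise disjoint, since e.g.\ the tuple indexed by $A_a\cap A_c$ would be a non-constructible real below both $g_a$ and $g_c$.
\item $g_c\le_c g_a\oplus g_b$ forces $A_c\subseteq A_a\cup A_b$.
\end{itemize}
Together these give $A_c=\varnothing$, a contradiction. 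More generally, such schemes always make the real indexed by $A_d\cap A_e$ available, so they only realize rings of sets, i.e.\ distributive lattices.

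\textbf{What is missing.} For finite $\Dc$ you need a genuinely non-product representation. That means a single generic $g_{\mymathbb{1}}$ and maps $\pi_d\in L$ with $\ker\pi_{d\vee e}=\ker\pi_d\cap\ker\pi_e$. These should come from a representation of $\Dc$ by equivalence relations, in the spirit of lattice tables for initial segments of the Turing degrees. That representation must be homogeneous enough to prove the lemma you actually need: every continuous function on the branches of a condition is, below some stronger condition, either constant or has the same kernel as some $\pi_d$.

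That lemma, and the combinatorics that make it true, are the substance of Adamowicz's theorem. Your steps 1--2 cannot be settled ``from the coding'' beforehand. The same coding must make joins constructible while keeping $g_e\notin L[g_d]$ for $d\not\geq e$. For infinite $\Dc$ it must also accommodate the non-uniformity shown in the first gap.
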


It is this malleability which allows us to code the bits of our real number in a way which is model-dependent in a lightface projective (specifically $\Sigma^1_4$) way. We may now prove our main set-theoretic result.

\begin{figure}
  \centering
   \hspace{-2.5em} \includegraphics[width=0.9\textwidth]{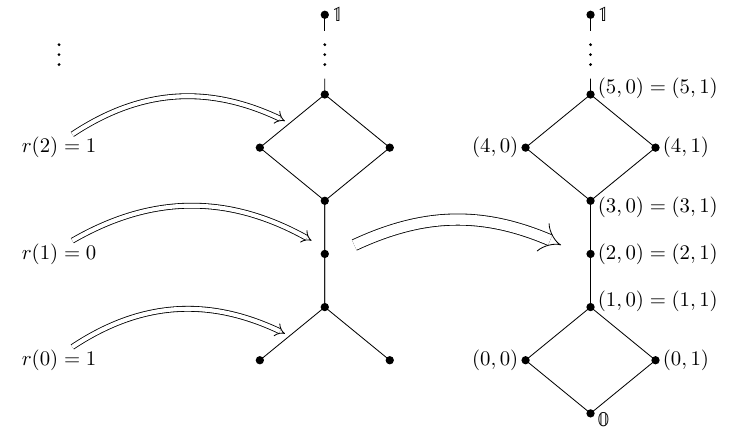}
  %
  \caption{ The $n$th bit of $r$ determines whether the corresponding level of the lattice $\Dc$ has width $1$ or $2$.
  }
  \label{fig:degree}
\end{figure}

\begin{thm}\label[thm]{thm:main-set-theory}
  There is a fixed $\Sigma^1_4$ definition $\Phi$ such that for any $r \in 2    ^\omega \cap L$, there is a forcing extension $L[G]$ in which $\Phi^{L[G]} = \{r\}$.
\end{thm}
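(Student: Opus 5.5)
We will use \cref{fact:Adamowicz} to build, for each $r \in 2^\omega \cap L$, a lattice $\Dc_r \in L$ whose shape encodes $r$ level by level, as in \cref{fig:degree}. Then we will write a single $\Sigma^1_4$ formula $\Phi$ that reads $r$ off from the structure of the $c$-degrees. Concretely, $\Dc_r$ will consist of levels $\ell_0 < \ell_1 < \ell_2 < \cdots$ followed by a top element $\mymathbb{1}$. Level $n$ contains one element if $r(n) = 0$ and two incomparable elements if $r(n) = 1$. Every element of level $n$ lies below every element of level $n+1$, and joins within a level are taken to be the unique element of the next level if it has width one. If it has width two, we would instead insert an auxiliary element; to keep the semilattice property simplest, we will probably interleave a width-one ``spine'' level between consecutive coding levels. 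This order is well-founded, has a top element, and is definable from $r$, so it lies in $L$. By \cref{fact:Adamowicz} there is then a forcing extension $L[G]$ whose $c$-degrees are isomorphic to $\Dc_r$ with a bottom $\mymathbb{0}$ added.

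Next we define $\Phi(r)$ uniformly, by describing the degree structure using quantifiers over reals. By \cref{fact:x-L-y}, $x \le_c y$ is $\Sigma^1_2$, so $x \not\le_c y$ is $\Pi^1_2$, and $\equiv_c$ and strict $<_c$ are Boolean combinations, hence $\Delta^1_3$. The intended formula is roughly
\[
\Phi(r) :\iff \exists \langle y_n \rangle_{n}\ \bigl[\text{$\langle y_n\rangle$ enumerates the spine degrees in order, and } \forall n\, (r(n)=1 \leftrightarrow \exists z\, (y_{2n} <_c z <_c y_{2n+1} \wedge z \not\equiv_c \ldots))\bigr].
\]
Here a sequence of reals is coded by a single real. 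The condition that the sequence starts at the minimal nonzero degrees and that each $y_{n+1}$ is an immediate successor of $y_n$ involves a universal real quantifier over a $\Delta^1_3$ matrix, so it is $\Pi^1_3$. The width test is $\Sigma^1_3$, and the outer existential quantifier brings the whole formula to $\Sigma^1_4$. With care, the witness to ``width two'' can be taken to be a pair of $c$-incomparable reals strictly between consecutive spine degrees, and ``width one'' becomes its negation. Both directions then sit within $\Sigma^1_4$, because $r(n)=0$ is expressed by the universal statement that every degree strictly between the spine points is comparable with everything, and that is $\Pi^1_3$.

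Finally we verify $\Phi^{L[G]} = \{r\}$. In $L[G]$ the degree structure is exactly $\Dc_r \cup \{\mymathbb{0}\}$, so a spine sequence exists, and it is unique up to $\equiv_c$. The width readings therefore force any $s$ satisfying $\Phi$ to equal $r$. Conversely, $r$ satisfies $\Phi$, because $r \in L$ means the definition uses nothing beyond the degree structure.

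The main obstacle is arranging $\Dc_r$ so that it is a genuine upper semilattice while its levels remain first-order recognizable from $\le_c$ in a way that is uniform in $r$. The spine-interleaving trick is what I would try first. The remaining work is to check carefully that the quantifier count really stays at $\Sigma^1_4$, in particular that the ``immediate successor'' clauses do not push the complexity higher.
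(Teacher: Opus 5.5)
\textbf{Verdict: the overall plan is right, but the step that aligns the quantified sequence with the levels is broken as written and needs a different condition.}

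Your lattice and your use of Adamowicz match the paper's proof exactly: coding levels of width one or two, alternating with width-one spine levels, plus a top. Your complexity bookkeeping is also fine. Minimality and immediate-successor clauses are $\Pi^1_3$, and $\Sigma^1_4$ absorbs Boolean combinations of $\Sigma^1_3$ conditions under $\forall n$.

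The gap is the step you flag as the main obstacle: forcing the existentially quantified sequence to line up with the actual levels. The clauses you wrote for this contradict each other.
\begin{itemize}
\item If $\langle y_n\rangle$ lists the spine degrees, a coding level always sits strictly between $y_n$ and $y_{n+1}$. So no such sequence satisfies ``$y_{n+1}$ is an immediate successor of $y_n$''.
\item The minimal non-constructible degrees form the first coding level, not a spine point.
\item If consecutive terms really were immediate successors, the interval $y_{2n} <_c z <_c y_{2n+1}$ in your width test would be empty.
\end{itemize}
Taken literally, then, nothing witnesses $\Phi$ in $L[G]$. You also cannot recover the spine from an intrinsic property such as ``comparable with every degree''. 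A width-one coding level has that property too, and for $r = 0^\omega$ the non-constructible degrees are just a chain. Which degrees count as spine is purely a matter of position, so alignment has to be enforced positionally. Your worry that the immediate-successor clauses raise the complexity is misplaced; the real problem is what they say.

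\textbf{Repair keeping your spine-only sequence.} Your width test suggests you want $\langle y_n\rangle$ to be spine points only. Then replace ``immediate successor'' with this condition: the set of non-constructible $z <_c y_0$, and each gap $\{z : y_{n} <_c z <_c y_{n+1}\}$, is nonempty ($\Sigma^1_3$) and consists of pairwise $\equiv_c$-or-$\perp_c$ degrees ($\Pi^1_3$). Checking level by level, this forces $y_n$ into the $n$th spine level. Then $r(n)=1$ iff the $n$th gap contains a $\perp_c$ pair.

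\textbf{Alternative repair with one degree per level.} Let the sequence pick a degree from every level. Require $y_0\notin L$ with every $z <_c y_0$ constructible, and require nothing strictly between $y_k$ and $y_{k+1}$. By induction $y_k$ lies in level $k$, and $r(n)=1$ iff $\exists z\,(z\perp_c y_{2n})$.

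\textbf{How the paper does it.} The paper solves the alignment problem differently. It quantifies over representatives $x_i$ of \emph{all} degrees. Through $\psi$ it requires every real to be $\equiv_c$ some $x_i$, and index levels to be strictly increasing. Through $\varphi$ it requires the two entries at each index level to be $\equiv_c$ or $\perp_c$. Exhaustiveness then forces index level $k$ to coincide with actual level $k$. Each bit is read off by a $\Sigma^1_2$/$\Pi^1_2$ comparison of $x_{(2n,0)}$ with $x_{(2n,1)}$, so the width test needs no extra real quantifier and the matrix is genuinely $\Pi^1_3$.
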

\begin{proof}
    Fix $r\in 2^\omega\cap L$. Let $\Dc_0$ be the partial order $\omega \times 2_{\text{antichain}}$ with the lexicographic ordering (where $2_{\text{antichain}}$ is an antichain of size $2$). Let $E$ be the equivalence relation on $\Dc_0$ generated by the set $\{((2n,0),(2n,1)) : r(n) = 0\} \cup \{((2n+1,0),(2n+1,1)) : n < \omega\}$. Finally, let $\Dc = \Dc_0/E + \{\mymathbb{1}\}$, where $\mymathbb{1}$ is added as a new top element. (This is the lattice in the middle of Figure~\ref{fig:degree}.) 

$\mathcal{D}$ is clearly a well-founded upper semilattice with a top element, so by \cref{fact:Adamowicz}, there is a forcing extension $L[G]$ of $L$ in which the lattice of $c$-degrees is isomorphic to $\{\mymathbb{0}\}+\mathcal{D}$ (i.e., $\Dc$ enlarged by a bottom element $\mymathbb{0}$). Let $\Phi$ be the formula which (informally) states that, in $L[G],$``$r$ is the bottom $c$-degree, the bits of $r$ are encoded into the $c$-degrees in the way we have just done, and there are no other $c$-degrees." The construction ensures that $\Phi^{L[G]}=\{r\}.$

We now state what the formula $\Phi$ is more carefully, and show that it is $\Sigma^1_4.$ First, we encode the lattice as a family $(x_{i})_{i \in \{\mymathbb{0}\} \sqcup \omega \times 2 \sqcup \{\mymathbb{1}\}}$ of real numbers, as in the lattice on the right of {Figure~\ref{fig:degree}}. Let $\varphi(x,(x_i))$ be the formula that says for each $n < \omega$: %
\begin{enumerate}
\item $x(n) = 1 \Leftrightarrow x_{(2n,0)} \equiv_c x_{(2n,1)}$
\item  $x(n) = 0 \Leftrightarrow x_{(2n,0)} \perp_c x_{(2n,1)}$ (where $y \perp_c z$ means that $y \not \leq_c z$ and $z \not \leq_c y$)
\item $x_{(2n+1,0)} \equiv_c x_{(2n+1,1)}$
\end{enumerate}
 By \cref{fact:x-L-y}, this is a computably countable Boolean combination of $\Sigma^1_2$ facts, so it is $\Pi^1_3$.

Let $\psi((x_i))$ be the conjunction of the following conditions:
\begin{enumerate}
\item For all $z$, there exists an $i \in \{\mymathbb{0}\} \sqcup \omega \times 2 \sqcup \{\mymathbb{1\}}$ such that $z \equiv_c x_i$.
\item For all $n < m < \omega$ and $i,j < 2$, $x_{\mymathbb{0}} <_c x_{(n,i)} <_c x_{(m,j)} <_c x_{\mymathbb{1}}$.
\end{enumerate}
In 1, the condition after `for all $z$' is a Boolean combination of $\Sigma^1_2$ conditions, and so is itself $\Pi^1_3$. The condition in 2 is a Boolean combination of $\Sigma^1_2$ conditions, and so is likewise $\Pi^1_3$. This means that $\psi((x_i))$ is a $\Pi^1_3$ condition.

Our final formula defining the real $x$ is the formula $$\Phi(x)=\exists (x_{(n,i)})\;[\varphi(x,(x_{(n,i)}))\land\psi((x_{(n,i)}))]$$

This formula is $\Sigma^1_4$ since there is an existential quantifier in front of a $\Pi^1_3$ formula, and since $\{y:\Phi(y)\}=\{x\}$, we have that $\{x\}$ is a $\Sigma^1_4$ set.
\end{proof}

\section{A small Diophantine computer}%

If you are an even slightly observant reader, you surely noticed that the main bodies of the formulas $(\ast)$ and $(\dagger)$ in the introduction bear a strong resemblance to each other. This is no coincidence. The common elements of these two formulas essentially specify a small computer made from an (exponential) Diophantine expression. The changes in indexing (e.g., $ 2^i 3^j$ vs.\ $9 + 2^i3^j$) just represent slightly different choices of `memory allocation,' and the additional elements in $(\dagger)$ are there to accommodate an input value (i.e., $m$) and some interaction with the real-valued variables $x_0,\dots,x_3$. The triples of natural numbers $(A,B,C)$ and $(D,E,F)$ represent different choices of computer programs. %

Something that is important to note about the arguments in this section is that if they feel a bit tedious and arbitrary, that's because they are. Much like in the design of an actual physical computer, there is an  incredible amount of leeway in the construction of a Diophantine computer. For instance, using arguments similar to those in \cite{AnyFunction}, it is possible to replace the expression in the square brackets of our main formula with a polynomial with integer coefficients. Our construction here is mostly optimized for the sake of producing a short mathematical expression.\footnote{Although certainly not as short as possible. For instance, with a longer version of the proof of \cref{lem:ce-set-indic}, one can replace $(n_0-2)^2 + (n_1-m)^2$ with $(n_0 - 2 -m)^2$. Likewise, it is possible to get away with $(x_k^2-\frac{n_{k+5}}{1+n_4})^2$, rather than $(x_k-\frac{n_{k+5}}{1+n_4} + n_4)^2$.} As you will see in the proof of \cref{cor:ce-set-indic-2}, a large fraction of the $n_k$'s are entirely unnecessary. Similarly, the variable $n_0$ could just be replaced with the constant $2$, but the resulting formula, while conceptually simpler, is longer to actually write down.

The following \cref{lem:ce-set-indic} is heavily based on ideas from early work on Hilbert's tenth problem, specifically Martin Davis, Hilary Putnam, and Julia Robinson's reduction of Diophantine equations to purely exponential equations in \cite{Davis1961}, and Robinson's enumeration of Diophantine equations in \cite{Robinson1971Hilbert10}. The  crucial idea in \cite{Davis1961} is this:
\begin{itemize}
\item Multiplication, $x\cdot y$, can be computed using exponentiation by $(2^x)^y = 2^{x\cdot y}$.
\item  Likewise, addition, $x+y$, can be computed using exponentiation and multiplication by $2^x \cdot 2^y = 2^{x+y}$, which can in turn be computed using only exponentiation by $(2^{2^x})^{2^y} = 2^{2^x 2^y} = 2^{2^{x+y}}$.
\item By putting these together, we can write arbitrary Diophantine equations using only exponentiation and the constant $2$. For example, $x+y-2xy = 0$ if and only if $(2^{2^x})^{2^y} = 2^{2^{x+y}} = 2^{2^{2xy}} = 2^{2^{xy}2^{xy}} = (2^{(2^x)^y})^{(2^x)^y}$.
\end{itemize}

In preparation for the proof, let's call a sequence $\vec{n} = (n_k)_{k \leq A}$ of natural numbers \emph{well-formed} if $n_0 = 2$ and for each $i,j \in \Nb$, if $9+2^i3^j \leq A$, then $n_{9+2^i3^j} = n_i^{n_j}$. Note that, since $(i,j) \mapsto 9 + 2^i3^j$ is an injection, any well-formed sequence $(n_k)_{k \leq A}$ can be extended (typically non-uniquely) to a well-formed sequence $(n_k)_{k \leq A'}$ for any $A' \geq A$. Also note that any non-empty initial segment of a well-formed sequence is well-formed. See Figure~\ref{fig:memory} for an explanation of the rationale behind this definition.

Let $Y$ be the set of variables $y_k$ for $k \notin \{0\}\cup\{9+2^i3^j : i,j \in \Nb\}$. Given a polynomial $p$ with variables in $Y$ and given a sequence $(n_k)_{k \leq A}$ (satisfying that for every variable $y_k$ in $p$, $k\leq A$) we will write $p(\vec{n})$ for the value of $p$ under the variable assignment $y_k \mapsto n_k$. (Note that not every $n_k$ in the given sequence will be assigned to a variable in $p$.)

\begin{lem}
  \label[lem]{lem:ce-set-indic}
  $ $
  \begin{enumerate}
  \item For any polynomial $p \in \Nb[Y]$ (i.e., with coefficients in $\Nb$), there is a natural number $D$ such that if $\vec{n} = (n_k)_{k \leq D}$ is well-formed, then \(n_D = 2^{2^{p(\vec{n})}} \).
  \item For any polynomial $p \in \Zb[Y]$, there is a pair of natural numbers $B,C$ with $B \geq C$ such that for any well-formed sequence $(n_k)_{n \leq B}$, $n_B = n_C$ if and only if $p(\vec{n}) = 0$. %
  \end{enumerate}
\end{lem}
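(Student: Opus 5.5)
Part 1 goes by structural induction on $p$, using the exponentiation identities of Davis--Putnam--Robinson recalled above. The memory cells $n_k$ with $k = 9+2^i3^j$ are forced in a well-formed sequence to equal $n_i^{n_j}$. So from cells $i,j$ already holding values we can always ``compute'' $n_i^{n_j}$ into a fresh cell $9+2^i3^j$, and this cell is strictly larger than $i$ and $j$. The goal is the stronger claim that for every $p \in \Nb[Y]$ there is an index $D$ with $n_D = 2^{2^{p(\vec n)}}$ for every well-formed $\vec n$; here ``well-formed up to index $D$'' suffices, since initial segments of well-formed sequences are well-formed. Iterating $k \mapsto 9+2^k3^j$ always produces larger indices, so every construction stays within an initial segment.

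\emph{Base cases.} First build $2$, $4 = 2^2$ and $16 = 2^4$ from $n_0 = 2$. The cell $9+2^0 3^0 = 10$ holds $n_0^{n_0} = 4 = 2^{2^1}$. For $p = 0$ we need $2^{2^0} = 2$, which is cell $0$. For $p=1$ we need $2^{2^1}=4$, which is cell $10$. For a variable $y_k$ with $k \in Y$, we want $2^{2^{n_k}}$. Cell $9+2^03^k$ holds $n_0^{n_k} = 2^{n_k}$; call it $a$. Then cell $9+2^0 3^{a}$ holds $2^{2^{n_k}}$.

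\emph{Inductive steps.} Suppose cells $a,b$ hold $2^{2^u}$ and $2^{2^v}$. For multiplication, $2^{2^{uv}} = 2^{(2^u)^v}$. From cell $a$ I recover $2^u$ by noting $2^{2^u} = 4^{2^{u-1}}$, but that needs $u\ge1$; this is awkward. A cleaner invariant is to store, for each subpolynomial $q$, the cells $c_q$ with $n_{c_q} = q(\vec n)$ itself. Then exponentiation gives $2^{q}$ (cell $9+2^0 3^{c_q}$) and $2^{2^q}$. Multiplication comes from $2^{uv} = (2^u)^v$, and we can take logs back via double exponentials. To avoid taking logs at all, I will instead maintain the pair $(2^{u}, 2^{2^u})$ at the level of exponents.
\begin{itemize}
\item \emph{Addition.} $2^{2^{u+v}} = (2^{2^u})^{2^v}$, which is one exponentiation from cells holding $2^{2^u}$ and $2^v$.
\item \emph{Multiplication.} $2^{2^{uv}} = 2^{(2^u)^v}$, which needs cells holding $2^u$ and $v$ itself.
\end{itemize}
So the right invariant is to have available $u$, $2^u$ and $2^{2^u}$ for each subterm. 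That seems to require raw values of sums and products, which we lack.

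The fix is to write $p$ as a sum of monomials $c\, y_{k_1}\cdots y_{k_r}$. For a monomial, $2^{y_1 y_2\cdots y_r}$ is computed by iterated exponentiation $(\cdots((2^{y_1})^{y_2})\cdots)^{y_r}$, each step being cell $9+2^{a}3^{k_s}$. Applying the base-$2$ map once more gives $2^{2^{m}}$ for the monomial value $m$, and also keeps $2^m$. Sums are then handled by $2^{2^{m+m'}} = (2^{2^m})^{2^{m'}}$, iterated over all monomials, with coefficients $c$ realized as repeated addition. The final cell is $D$. The main obstacle is exactly this bookkeeping of which exponential level is available; the monomial decomposition resolves it.

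For part 2, write $p = p_+ - p_-$ with $p_\pm \in \Nb[Y]$. By part 1 obtain cells $B',C'$ holding $2^{2^{p_+}}$ and $2^{2^{p_-}}$. Since $t\mapsto 2^{2^t}$ is injective, $n_{B'} = n_{C'}$ holds iff $p(\vec n)=0$. To ensure $B \ge C$, set $C = \min(B',C')$ and $B = \max(B',C')$. If $B' = C'$, then $p_+ = p_-$ as polynomials, and any $B = C$ works.
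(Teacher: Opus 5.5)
Your proposal is correct and follows essentially the same route as the paper. You build $2^{m}$ for each coefficient-one monomial by iterated exponentiation, with cell $9+2^{E_q}3^k$ holding $(2^{q})^{n_k}$; you accumulate sums one monomial at a time via $(2^{2^r})^{2^q}=2^{2^{r+q}}$, handling coefficients by repetition; and for part 2 you split $p$ into two $\Nb$-polynomials and use injectivity of $t\mapsto 2^{2^t}$. The exploratory detours can simply be deleted: the $2^{2^{n_k}}$ base case, the abandoned multiplication invariant, and the separate $B'=C'$ case, which the general argument already covers.
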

\begin{proof}
For 1, first note that if $p = 0$, then we can just take $D = 0$ (since $2^{2^0} = 2$).
  
  Next, we claim that if $p$ is a monomial with coefficient $1$, then we can find a natural number $E_p$ such that if $\vec{n}$ is well-formed, then $n_{E_p} = 2^{p(\vec{n})}$. This follows by induction on the degree of $p$. For $p = 1$, we can take $E_p = 2$. If $p = y_kq$ and $E_q$ is already known, then we can take $E_p = 9+2^{E_q}3^{k}$. This works because (in a well-formed sequence), we have that $n_{E_p} = (n_{E_q})^{n_k}=(2^{q(\vec{n})})^{n_k} = 2^{n_kq(\vec{n})}$.

  Finally, for general non-zero polynomials, note that any such polynomial can be written as a sum of monomials with coefficient $1$. Again, we proceed by induction. Let $q$ be a monomial with coefficient $1$ and let $r$ be a polynomial satisfying that for any well-formed $\vec{n}$, $n_{D_r} = 2^{2^{r(\vec{n})}}$. Now let $p = r + q$. We may take $D_p$ to be $9 + 2^{D_r}3^{E_q}$. For any well-formed $\vec{n}$, we now have that 
  \[
    n_{D_p} = (n_{D_r})^{n_{E_q}} = (2^{2^{r(\vec{n})}})^{2^{q(\vec{n})}} = 2^{2^{r(\vec{n})}2^{q(\vec{n})}} = 2^{2^{r(\vec{n}) + q(\vec{n})}} = 2^{2^{p(\vec{n})}}.
  \]
  So we have by induction that for any polynomial $p \in \Nb[Y]$, we can find the required $D_p$.

  For 2, first note that by an elementary argument we have that for any $p \in \Zb[Y]$, there are $q_0,q_1 \in \Nb[Y]$ such that $p = q_0 - q_1$. By part 1, we can find $B,C \in \Nb$ such that for any well-formed $\vec{n}$, $n_B = 2^{2^{q_0(\vec{n})}}$ and $n_C = 2^{2^{q_1(\vec{n})}}$. Without loss of generality, we may assume that $B \geq C$. Now it is immediate that for any sufficiently long well-formed $\vec{n}$,
  \[
    p(\vec{n}) = 0 \Leftrightarrow q_0(\vec{n}) = q_1(\vec{n}) \Leftrightarrow 2^{2^{q_0(\vec{n})}} = 2^{2^{q_1(\vec{n})}} \Leftrightarrow n_B = n_C. \qedhere
  \]
\end{proof}

\begin{figure}
  \centering
  \includegraphics[width=\textwidth]{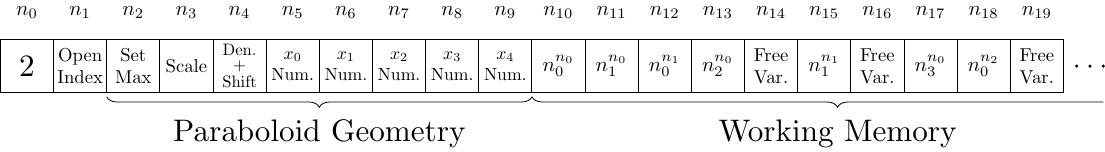}
  \caption{`Memory layout' of our Diophantine formula. $n_0$ is a fixed reference value of $2$. $n_1$ is the index of the open set being constructed. $n_2$ helps ensure that the maximum value of the function is $1$. $n_3$ is the scale of the paraboloid. $n_4$ sets the denominators of the coordinates of the vertex of the paraboloid and shifts them in the negative direction. $n_5$ through $n_9$ set the numerators of the coordinates of the vertex. Addresses of the form $n_{9+2^i3^j}$ store the value of $n_i^{n_j}$ and the remaining addresses are free variables.}
  \label{fig:memory}
\end{figure}

Now using \cref{lem:ce-set-indic} and the fact that computably enumerable sets are the same as Diophantine sets, we get the following corollary. 

\begin{cor}
  \label[cor]{cor:ce-set-indic-2}
  For any computably enumerable set $W \subseteq \Nb^9$, there are natural numbers $B,C$ with $B \geq C$ such that  $(n_1,n_2,\dots,n_{9})$ is an element of $W$ if and only if it extends to a well-formed sequence $(n_k)_{k \leq B}$ satisfying $n_B = n_C$.
\end{cor}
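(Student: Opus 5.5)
By the Matiyasevich--Robinson--Davis--Putnam theorem, $W$ is Diophantine: there is a polynomial $P(a_1,\dots,a_9,z_1,\dots,z_\ell)$ with integer coefficients such that
\[
(a_1,\dots,a_9) \in W \iff \exists z_1,\dots,z_\ell \in \Nb \; P(\vec{a},\vec{z}) = 0 .
\]
The plan is to rename variables so that this becomes a polynomial $p \in \Zb[Y]$, and then apply part 2 of \cref{lem:ce-set-indic}.

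The renaming goes as follows. The indices $1,\dots,9$ all lie in $Y$, since none of them is $0$ or of the form $9+2^i3^j$ (the smallest such number is $10$). So we substitute $y_k$ for $a_k$ when $1 \le k \le 9$. For the witnesses $z_1,\dots,z_\ell$ we choose distinct indices $k_1,\dots,k_\ell$ that are larger than $9$ and avoid $\{9+2^i3^j\}$; for example $k_t = 9+5t$, since $5t$ is never of the form $2^i3^j$. This gives $p(\vec y) = P(y_1,\dots,y_9,y_{k_1},\dots,y_{k_\ell}) \in \Zb[Y]$. \Cref{lem:ce-set-indic}(2) then supplies $B \ge C$ such that every well-formed $(n_k)_{k\le B}$ satisfies $n_B = n_C \iff p(\vec n)=0$.

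We also need every variable of $p$ to have index at most $B$. To arrange this, replace $p$ by an equivalent polynomial whose $D$-indices are forced to be large, for instance $p + y_{K}-y_{K}$ with $K$ huge. A cleaner option is to note that the construction in \cref{lem:ce-set-indic} produces $B = 9+2^{D}3^{E}$ with $E$ at least as large as the variable indices involved. In particular, each $E_q$ for a monomial $q = y_k q'$ is $9+2^{E_{q'}}3^k > k$, so $B$ and $C$ exceed every variable index unless $p$ is degenerate. The degenerate case is trivial: if $P$ is constant then $W$ is $\varnothing$ or $\Nb^9$, and we can pad $P$ with, say, $+\,y_1 - y_1$ to make it non-degenerate.

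With this in hand we check the two directions. Suppose $(n_1,\dots,n_9) \in W$. Take witnesses $z_t$, set $n_0 = 2$ and $n_{k_t} = z_t$, and assign all other free indices arbitrarily, say $0$. Then define $n_{9+2^i3^j} = n_i^{n_j}$ by recursion on the index; this is legitimate because $9+2^i3^j > i,j$. The result is a well-formed sequence of length $B$ with $p(\vec n) = 0$, hence $n_B = n_C$. Conversely, suppose the tuple extends to a well-formed sequence with $n_B = n_C$. Then $p(\vec n)=0$, so the values $n_{k_t}$ are Diophantine witnesses, and the tuple is in $W$.

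The main thing to be careful about is the index bookkeeping. The witness variables and the input variables $1$ through $9$ must all be free addresses, and the recursive completion of a partial assignment to a well-formed sequence must be consistent. Consistency follows from the injectivity of $(i,j)\mapsto 9+2^i3^j$ and from the fact that each such index is larger than both $i$ and $j$. Everything else follows directly from MRDP and \cref{lem:ce-set-indic}.
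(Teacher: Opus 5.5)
Your proof is correct and follows essentially the same route as the paper: a Diophantine representation of $W$, witness variables placed at the free addresses $y_{9+5t}$, and then part 2 of \cref{lem:ce-set-indic}. The differences are minor. You apply MRDP directly to the $9$-ary relation rather than first collapsing $W$ into a subset of $\Nb$ via the pairing polynomial $J$, and you make explicit the index bookkeeping ($B\geq 9$, and $B$ larger than every variable index of $p$) that the paper leaves implicit; note, though, that $P+y_1-y_1$ is formally the same polynomial as $P$, so for $W=\Nb^9$ it is cleaner to just take $B=C=9$ directly.
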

\begin{proof}
  Fix a computably enumerable set $W \subseteq \Nb^9$. Let $J(x,y) = x+(x+y)^2$, and recall that $J$ is an injection from $\Nb^2$ into $\Nb$. Let $q(y_1,y_2,\dots,y_{9}) = J(y_1,J(y_2,\allowbreak J(y_3,J(y_4,J(y_5,J(y_6,J(y_7,J(y_8,y_9))))))))$, and note that this is an injection from $\Nb^9$ into $\Nb$. Let $W' = \{q(y_1,y_2,\dots,y_{9}) : (y_1,y_2,\dots,y_{9}) \in W\}$. It is immediate that $W'$ is a computably enumerable subset of $\Nb$. By the proof of the negative resolution of Hilbert's tenth problem \cite{Matiyasevich_1970}, we can find a polynomial $p(w,z_1,\dots,z_m) \in \Zb[w,z_1,\dots,z_m]$ such that for any $w \in \Nb$, $w \in W'$ if and only if there exists $(z_1,\dots,z_m) \in \Rb^m$ such that $r(w,z_1,\dots,z_m) = 0$. Now consider the polynomial
  \[
    p = r(q(y_1,y_2,\dots,y_{9}),y_{9+5\cdot 1},y_{9+5\cdot 2},\dots,y_{9+5\cdot m}),
  \]
  where $y_{9+5\cdot 1},\dots,y_{9+5\cdot m}$ were merely chosen to be distinct variables in $Y$ (i.e., with indices not of the form $9+2^i3^j$). By construction, we have that $(y_1,y_2,\dots,y_{9}) \in W$ if and only if there are $(y_{9+5\cdot 1},\dots,y_{9+5\cdot m}) \in \Nb^m$ such that $p = 0$.

  By \cref{lem:ce-set-indic}, we can find $B,C \in \Nb$ with $B \geq C$ such that for any well-formed sequence $(n_k)_{k \leq B}$, $p(\vec{n}) = 0$ if and only if $n_B = n_C$. Since every tuple $(n_1,n_2,\dots,n_{9},n_{9+5\cdot 1},\dots,n_{9+5\cdot m})$ extends to a well-formed sequence $(n_k)_{k \leq B}$, we have that $(n_1,n_2,\dots,n_{9}) \in W$ if and only if it extends to a well-formed sequence $(n_k)_{k \leq B}$ satisfying $n_B = n_C$, as required.
\end{proof}

\section{Main theorem}
To avoid writing such a large expression over and over again, 
let 
  \[
f_{A,B,C}(x_0,\dots,x_4,m,n_0,\dots,n_A) =   \left[
    \begin{matrix*}[l]
      \phantom{+}(n_0 - 2)^2  +  (n_1-m)^2  \\
      + n_2 + (n_B - n_C)^2 \\
      + n_3 \sum_{k=0}^4 ( x_k - \frac{n_{k+5}}{1+n_4} +n_4)^2 \\
      + \sum_{i,j = 0}^B (n_{9+2^i3^j} - n_i^{n_j})^2
    \end{matrix*}
    \right].
  \]
  We will abbreviate this as $f_{A,B,C}$.

The following lemma is essentially an effective or computable version of the fact that any indicator function of a closed set is an infimum of a family of functions of the form $a(\vec{x}-\vec{b})^2 + c$ with $a,c \geq 0$.
  
\begin{lem}
  \label[lem]{lem:indic}
  For any uniformly computably sequence $(U_m)_{m \in \Nb}$ of non-empty open subsets of $\Rb^5$, there are natural numbers $B$ and $C$ such that
 \[\inf_{n_0,\dots,n_A \in \Nb}f_{A,B,C} =
    1 - \chi_{U_m}(x_0,x_1,x_2,x_3,x_4)\] %
  for each $m \in \Nb$, where $A = 9+2^B3^C$ and $\chi_{U_m}$ is the indicator function of $U_m$. %
\end{lem}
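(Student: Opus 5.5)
We define a computably enumerable set $W \subseteq \Nb^9$ that encodes, for each $m$, the balls making up $U_m$ together with the scale parameters. We then apply \cref{cor:ce-set-indic-2} to $W$ to get $B \geq C$, and set $A = 9+2^B3^C$.

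First we analyse the infimum. Because of the terms $(n_0-2)^2$, $(n_1-m)^2$ and $\sum_{i,j\le B}(n_{9+2^i3^j}-n_i^{n_j})^2$, any assignment that is not well-formed up to $B$ (or has $n_1\ne m$) contributes at least $1$. The same holds for any assignment with $n_B \ne n_C$, because of the term $(n_B-n_C)^2$. The choice $A = 9+2^B3^C$ guarantees that every index $9+2^i3^j$ with $i,j\le B$ is at most $A$, so all these terms make sense. All terms are nonnegative, so the infimum is always at most $1$: take $n_2=1$, $n_3=0$, a well-formed sequence with $n_B=n_C$ and $n_1=m$. Such a sequence exists because $U_m$ is nonempty, so $W$ will contain some tuple with first coordinate $m$. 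Consequently the infimum equals
\[
\min\Bigl(1,\ \inf\bigl\{ n_2 + n_3\,\|\vec{x}-\vec{c}\|^2 : (m,n_2,\dots,n_9)\in W\bigr\}\Bigr),
\]
where $\vec{c} = \bigl(\tfrac{n_{k+5}}{1+n_4}-n_4\bigr)_{k\le 4}$.

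Next we choose $W$. The set $W$ consists of all tuples $(m,n_2,\dots,n_9)$ with $n_2\in\{0,1\}$ such that either $n_2=1$, or $n_2=0$ and the closed ball of radius $n_3^{-1/2}$ about $\vec c$ is contained in some ball $B_{r_{n,m}}(\vec{x}_{n,m})$ listed by the program for $U_m$. Since $\vec c$ ranges over all rational vectors (via $n_4$ and $n_{5},\dots,n_9$), we require strict containment in a rational ball, which is decidable from rational data: $\|\vec c-\vec x_{n,m}\|+n_3^{-1/2}<r_{n,m}$ can be checked by squaring. Hence $W$ is c.e. We also want $n_3\ge 1$ when $n_2=0$.

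Now we verify the two cases. If $\vec{x}\notin U_m$, then every tuple in $W$ with $n_2=0$ has $\|\vec x-\vec c\|\ge n_3^{-1/2}$, because the ball around $\vec c$ lies inside $U_m$. So each such term is at least $1$, and tuples with $n_2 = 1$ also give at least $1$. The infimum is therefore $1$. If $\vec{x}\in U_m$, pick a listed ball containing $\vec x$. For each large $N$ choose a rational $\vec c$ within $\varepsilon$ of $\vec x$ with $\varepsilon$ small compared to $N^{-1}$, and take $n_3 = N$; we need $n_3\|\vec x-\vec c\|^2\to 0$ while the ball of radius $N^{-1/2}$ about $\vec c$ stays inside the listed ball. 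This drives the infimum to $0$.

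The main obstacle is that the infimum must actually reach $0$ on $U_m$, not merely approximate it, yet for each fixed $\vec x$ we can only make $n_3\|\vec x-\vec c\|^2$ small, not zero, unless $\vec x$ is itself a rational point of the right form. Taking an infimum over infinitely many choices still yields exactly $0$, so this is fine: the statement concerns the infimum, not a minimum. We will also need to check carefully that the shift by $n_4$ and the denominator $1+n_4$ still realise all rationals to arbitrary precision. Here $n_4$ is shared between the shift and the denominator, and we handle this by choosing $n_4$ and then numerators $n_{k+5}=(1+n_4)(c_k+n_4)$, which are natural numbers once $n_4$ is large and divisible enough.
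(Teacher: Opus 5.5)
Your proposal is correct and follows essentially the same route as the paper's proof. You use the same c.e. set $W$ (closed-ball containment in a listed ball, plus the $n_2=1$ tuples that force the infimum to be at most $1$), apply \cref{cor:ce-set-indic-2}, and run the same two-case analysis, and your explicit $n_3\ge 1$, containment in all of $\Rb^5$, and decidability check fill in the details the paper calls tedious.

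One small inaccuracy, inherited from the statement itself: when $B>C$, which is the case that actually arises, $A=9+2^B3^C$ does \emph{not} bound the indices $9+2^i3^j$ for $i,j\le B$ (for instance $9+2^B3^B>A$). So the infimum should really range over all indices up to $9+6^B$; this does not affect your argument.
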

\begin{proof}
  Since the $U_m$ are uniformly computably open and non-empty, we can find a computable list $((\vec{y}_{m,i},r_{m,i}))_{m,i \in \Nb}$ of rational points and radii such that for each $m$, $U_m = \bigcup_{i \in \Nb}B_{r_{m,i}}(\vec{y}_{m,i})$. Let $W$ be the set of all tuples $(n_1,n_2,\dots,n_{9}) \in \Nb^9$ such that either 
  \begin{itemize}
  \item $n_2 = 1$ and $n_3 = 0$ or
  \item $n_2 = 0$ and there exists an $i \in \Nb$ such that for any $(x_0,x_1,\dots,x_4) \in \Rb_{\geq 0}^5$, if
    \(
      n_3\sum_{k=0}^4\left(x_k - \frac{n_{k+5}}{1+n_4} + n_4\right)^2 \leq 1,
    \)
    then $(x_0,x_1,\dots,x_4) \in B_{r_{n_1,i}}(\vec{y}_{n_1,i})$.
  \end{itemize}
  It is not difficult, but somewhat tedious, to show that $W$ is also a computably enumerable set.

  By \cref{cor:ce-set-indic-2}, we can find natural numbers $B,C$ such that  $(n_1,n_2,\dots,n_{9}) \in W$ if and only if any of the following equivalent conditions hold:
  \begin{itemize}
  \item $(n_1,n_2,\dots,n_9)$ extends to a well-formed sequence $(n_k)_{k \leq 9+2^B3^C}$ satisfying $n_B = n_C$.
  \item $(n_1,n_2,\dots,n_9)$ extends to a sequence $(n_k)_{k \leq 9+2^B3^C}$ satisfying that
    \[
      (n_0-2)^2 + \sum_{i,j = 0}^B (n_{9+2^i3^j} - n_i^{n_j})^2 + (n_B - n_C)^2 = 0.
    \]
  \item The same as the previous bullet but with $< 1$ instead of $= 0$.
  \end{itemize}

  \begin{sloppypar}
    It is straightforward to show that for any $m\in \Nb$, $\e > 0$, and $(x_0,x_1,\dots,x_4) \in U_m$, there is $(n_1,n_2,\dots,n_9) \in W$ such that $n_1 = m$, $n_2 = 0$, and $n_3\sum_{k=0}^4(x_k - \frac{n_{k+5}}{1+n_4} + n_4)^2 < \e$.\footnote{This relies on the fact that for any rational $(y_0,\dots,y_4)$, there are $(n_4,n_5,\dots,n_9) \in \Nb^6$ such that $y_k = \frac{n_{k+5}}{1+n_4} -n_4$ for each $k \leq 4$. For a careful version of a similar argument, see the proof of Lemma~4.4 in \cite{AnyFunction}.} This implies that for any $m$ and $(x_0,x_1,\dots,x_4) \in U_m$, we have that $\inf_{n_0,\dots,n_A \in \Nb}f_{A,B,C} = 0$.
  \end{sloppypar}
  Finally we need to show that for any $m \in \Nb$, if $(x_0^2,x_1^2,\dots,x_4^2) \notin U_m$, then $\inf_{n_0,\dots,n_A \in \Nb}f_{A,B,C} = 1$. By our inclusion of tuples satisfying $n_2 = 1$ and $n_3=0$ into $W$, we have that $\inf_{n_0,\dots,n_A \in \Nb}f_{A,B,C} \leq 1$ for any $(m,x_0,\dots,x_4)$.\footnote{This is the only reason for the inclusion of the $n_2$ term.} By the construction of $W$, we have that for any $m \in \Nb$ and $(x_0,\dots,x_4,) \in \Rb^5$, if $(x_0,\dots,x_4) \notin U_{m}$, then $f_{A,B,C} \geq 1$ for every $(n_0,\dots,n_A) \in \Nb^{A+1}$, implying that $\inf_{n_0,\dots,n_A \in \Nb}f_{A,B,C} = 1$, as required.
\end{proof}

\begin{cor}\label[cor]{cor:indic-half}
  For any $\Sigma^1_4$ definition $X$ of a subset of $\Rb$, there are natural numbers $A$, $B$, and $C$ such that
  \[
    g(x_0) =
  \sup_{x_1 \in \Rb} \inf_{x_2 \in \Rb} \sup_{x_3 \in \Rb}\inf_{x_4 \in \Rb}\sup_{m \in \Nb}\inf_{n_0,\dots,n_A \in \Nb}f_{A,B,C}
  \]
  is the indicator function of the set defined by $X$.
\end{cor}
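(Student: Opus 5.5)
Since $X$ is $\Sigma^1_4$, unwinding the definitions (and using the effective isomorphism between $\Rb$ and $2^\omega$ from step~\ref{des-set-thy}, which lets us pass freely between the Cantor-space and real-line versions of the hierarchy) gives a uniformly computable sequence $(V_m)_{m\in\Nb}$ of open subsets of $\Rb^5$ such that
\[
x_0 \in X \iff \exists x_1\,\forall x_2\,\exists x_3\,\forall x_4\ \bigl[(x_0,\dots,x_4) \in \textstyle\bigcap_m V_m\bigr].
\]
The normal form arises because $\Sigma^1_4$ means $\exists\forall\exists$ in front of a $\Sigma^1_1$ set. A $\Sigma^1_1$ set is the projection of a computably $G_\delta$ set, which contributes one more existential real quantifier. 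We then need to absorb this extra $\exists$ together with the outermost countable intersection so that the innermost matrix is a single quantifier $\forall x_4$ in front of a computably $G_\delta$ condition. The standard way to do this is via the Kleene normal form, i.e.\ tree representations. Viewing reals in Baire space, a $\Pi^1_1$ set has the form $\forall y\,\exists n\,R(x,y\restriction n)$ with $R$ computable. Equivalently, it is $\forall y$ of a computably open set, and a fortiori $\forall y$ of a computably $G_\delta$ set. So $\Sigma^1_4 = \exists\forall\exists\forall\,[G_\delta]$, with the final $\Pi^1_1$ layer supplying the $\forall x_4$.

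I then plan to apply \cref{lem:indic} to the complements. First I would ensure each $V_m$ is non-empty, for example by replacing $V_m$ with $V_m \cup O$ for a fixed small open set $O$ avoided by a suitable coding. A cleaner fix is to use the encoding so that the relevant reals $x_k$ lie in a fixed region, and to throw in an irrelevant ball outside that region. \cref{lem:indic} gives $B,C$ with $\inf_{\vec n} f_{A,B,C} = 1-\chi_{V_m}$. Then
\[
\sup_m\inf_{\vec n} f_{A,B,C} = 1-\chi_{\bigcap_m V_m},
\]
which is the indicator of the complement of the $G_\delta$ set, i.e.\ of an $F_\sigma$ set. This reverses polarity. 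To fix it, I would arrange the normal form for the complement of $X$ instead: write $\Rb\setminus X$ as $\forall x_1\exists x_2\forall x_3\exists x_4$ of an $F_\sigma$ set. Equivalently, write $X$ as $\exists\forall\exists\forall$ of a $G_\delta$ set $G$ and apply the lemma to $(V_m)$ with $G=\bigcap V_m$. Then $h=\sup_m\inf f = \chi_{\Rb^5\setminus G}$.

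This is where the alternation must be checked carefully. A sup over $x$ of an indicator is the indicator of the projection, and an inf is the indicator of the universal. So
\[
\sup_{x_1}\inf_{x_2}\sup_{x_3}\inf_{x_4}\chi_{S}
\]
is the indicator of $\exists\forall\exists\forall S$, and we need $S = \Rb^5\setminus G$. I would therefore choose the $G_\delta$ presentation so that $S$ is the relevant matrix. Concretely, I would take the matrix of $X$'s normal form to be $F_\sigma$, namely $\exists x_1\forall x_2\exists x_3\forall x_4\,[\bigcup_m C_m]$ with $C_m$ computably closed, and set $V_m=\Rb^5\setminus C_m$. The $\Pi^1_1$ normal form supports this: the final $\forall y$ layer is in front of a computably open condition $\bigcup_n$ (clopen in Baire space). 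After transferring to $\Rb$, this becomes a union of computably closed pieces, which is computably $F_\sigma$.

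The main obstacle I expect is this transfer. The bijection $\Rb\to 2^\omega$ or Baire space is not a homeomorphism, so the innermost matrix's Borel complexity ($F_\sigma$ rather than $G_\delta$) must survive the coding. I would handle it by quantifying only over reals that code points of Baire space, e.g.\ via continued fractions on the irrationals. The failure of a real to be irrational is itself an $F_\sigma$ condition, i.e.\ countably many points, and can be folded into the $F_\sigma$ matrix (for universally quantified variables) or excluded by adding those points to the union (for existential variables) using the usual relativization trick. I would then check that the requisite computable closed pieces $C_m$ are uniform in $m$, so that \cref{lem:indic} applies, and finally take $A=9+2^B3^C$.
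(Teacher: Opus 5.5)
Your final plan in the third paragraph is the paper's proof. You present $X$ as $\exists x_1\forall x_2\exists x_3\forall x_4$ in front of $\Rb^5\setminus\bigcap_m V_m$ with $(V_m)$ uniformly computably open. You then use \cref{lem:indic} to get $\sup_m\inf_{\vec n}f_{A,B,C}=\chi_{\Rb^5\setminus\bigcap_m V_m}$, and read the four real $\sup/\inf$'s as $\exists/\forall$. Your polarity bookkeeping there is right; the written proof's ``$1-\sup_m\inf f$'' should just be $\sup_m\inf f$.

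What goes wrong is how you justify the normal form. You say $\Sigma^1_4$ is $\exists\forall\exists$ in front of a $\Sigma^1_1$ set. It is $\exists\forall\exists$ in front of a $\Pi^1_1$ set; with a $\Sigma^1_1$ matrix the block collapses to $\Sigma^1_3$. The paper's hierarchy is defined directly on $\Rb^k$, so a $\Pi^1_1$ subset of $\Rb^4$ is by definition $\{\vec x:\forall x_4\,(\vec x,x_4)\notin G\}$ for a computably $G_\delta$ set $G=\bigcap_m V_m\subseteq\Rb^5$. Unwinding $\Sigma^1_4=\exists\,\Pi^1_3=\exists\forall\,\Sigma^1_2=\exists\forall\exists\,\Pi^1_1$ therefore gives exactly the normal form with matrix $\Rb^5\setminus G$. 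That unwinding is the entire first step of the paper's argument. The $\Rb$/$2^\omega$ translation is not used in this corollary; it happens earlier, when \cref{thm:main-set-theory} is turned into a definition on $\Rb$.

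So the Kleene normal form, Baire space and continued fractions are unnecessary, and the ``main obstacle'' you identify does not arise. This matters because the transfer as you sketch it has a step that fails. For an existentially quantified coordinate, adding the rational points to the $F_\sigma$ union makes every rational value an automatic witness instead of excluding it. Genuinely excluding rationals is a $G_\delta$ condition, which cannot simply be intersected into an $F_\sigma$ matrix; it would have to be pushed under a further universal quantifier.

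Your non-emptiness patch has a similar defect. Replacing $V_m$ by $V_m\cup O$ deletes $O$ from the matrix. That can falsify $\forall x_2$ or $\forall x_4$ unless $O$ is placed where an existentially quantified coordinate is already out of range. It is simpler to note that the proof of \cref{lem:indic} uses non-emptiness only to have an enumeration of balls. If the enumeration for some $m$ is empty, the only tuples in $W$ with $n_1=m$ are those with $n_2=1$, $n_3=0$. The infimum is then $1=1-\chi_{\varnothing}$, as required.
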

\begin{proof}
  \begin{sloppypar}
    Fix a $\Sigma^1_4$ definition $X$ of a subset of $\Rb$. Let $(U_n)_{n \in \Nb}$ be a sequence of uniformly computably open sets corresponding to $X$. By \cref{lem:ce-set-indic}, $1-\sup_{m \in \Nb}\inf_{n_0,\dots,n_A \in \Nb} f_{A,B,C}$ is the indicator function of a set $Y$ that is the complement of a computably $G_\delta$ subset of $\Rb^5$ with the property that $x_0$ is in the set defined by $X$ if and only if
  \end{sloppypar}
  \begin{itemize}
  \item there exists an $x_1 \in \Rb$ such that for every $x_2 \in \Rb$, there exists an $x_3 \in \Rb$ such that for every $x_4 \in \Rb$, $(x_0,\dots,x_4) \in Y$.
  \end{itemize}
  This is equivalent to saying that $g(x_0)$ is the indicator function of the set defined by $X$.
\end{proof}

Now we finally come to our headline result.

\begin{thm}\label[thm]{thm:main}
  There are natural numbers $A$, $B$, and $C$ such that for any non-negative real $r \in \Rb \cap L$, there is a well-founded model $M$ of \ZFC{} such that in $M$
  \[
    r = 
  \sup_{x_0,x_1 \in \Rb} \inf_{x_2 \in \Rb} \sup_{x_3 \in \Rb}\inf_{x_4 \in \Rb}\sup_{m \in \Nb}\inf_{n_0,\dots,n_{A} \in \Nb} x^2_0 \left[
    \begin{matrix*}[l]
      \phantom{+}(n_0 - 2)^2  +  (n_1-m)^2  \\
      + n_2 + (n_B - n_C)^2 \\
      + n_3 \sum_{k=0}^4 ( x_k^2 - \frac{n_{k+5}}{1+n_4}  + n_4)^2 \\
      + \sum_{i,j = 0}^B (n_{9+2^i3^j} - n_i^{n_j})^2
    \end{matrix*}
    \right].
  \]
\end{thm}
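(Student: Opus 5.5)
The plan is to combine \cref{thm:main-set-theory} with \cref{cor:indic-half}, and then use the outer $\sup_{x_0}$ of $x_0^2\cdot g(x_0)$ to recover $r$ from a definable set of reals.

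\textbf{Step 1: the set to encode.} Fix a nonnegative $r \in \Rb \cap L$. Encode it as an element $\hat r \in 2^\omega \cap L$ via a fixed computable-from-the-code coding (binary expansion of $r$). Then let $\Psi(y)$ be the formula ``$y \geq 0$ and $y$ is the real decoded from the unique $x$ with $\Phi(x)$.'' This $\Psi$ is still $\Sigma^1_4$, because the decoding map is computable and $\Sigma^1_4$ is closed under existential real quantification and conjunction with arithmetical conditions.

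By \cref{thm:main-set-theory} there is a forcing extension $M = L[G]$, which is a well-founded model of \ZFC{}, with $\Phi^{M} = \{\hat r\}$. Hence $\Psi^M = \{r\}$. The absoluteness of the decoding between $L$ and $M$ is immediate, since it is arithmetical.

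\textbf{Step 2: the indicator function.} Apply \cref{cor:indic-half} to $\Psi$, but with the variables $x_k$ replaced by $x_k^2$ as in the statement. I would rerun \cref{lem:indic} for the open sets $U_m' = \{\vec x : (x_0^2,\dots,x_4^2) \in U_m\}$; the proof of \cref{lem:indic} already handles squared coordinates, since $W$ was defined via nonnegative reals. Alternatively, I would replace $\Psi$ by its pullback under squaring, noting that each real quantifier over $\Rb$ applied to $x_k^2$ ranges over $\Rb_{\geq 0}$, which is harmless after composing with a computable homeomorphism $\Rb_{\geq0}\cong\Rb$ built into the definition. The outcome is fixed $A,B,C$ such that, in any model of \ZFC{},
\[
g(x_0)=\sup_{x_1}\inf_{x_2}\sup_{x_3}\inf_{x_4}\sup_m\inf_{\vec n} f_{A,B,C}
\]
is the indicator of $\{x_0 : \Psi(x_0^2)\}$.

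These $A,B,C$ do not depend on $r$, because the formula $\Phi$ is fixed. This independence is the crucial point: \cref{cor:indic-half} is applied to the single fixed definition, and it is interpreted uniformly in each model.

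\textbf{Step 3: evaluating the outer supremum.} The factor $x_0^2$ is constant in the inner variables, so it pulls out of every inner quantifier because it is nonnegative. The one exception is $x_0 = 0$, where everything is $0$. Thus the whole expression equals $\sup_{x_0} x_0^2 g(x_0)$.

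In $M$, $g(x_0)=1$ iff $x_0^2 = r$, and $g(x_0) = 0$ otherwise. So the supremum is $r$. When $r=0$ the value is also $0$, since every term is nonnegative.

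\textbf{Main obstacle.} The delicate step is Step 2. One must check that the equality with the indicator function holds inside $M$, i.e.\ that \cref{lem:indic} and \cref{cor:indic-half} are \ZFC{} theorems applied internally, so that $\Psi$ is interpreted with $M$'s reals. One must also verify carefully the squared-coordinate bookkeeping, including the alternation pattern $\sup\inf\sup\inf\sup_m$ matching a $\Sigma^1_4$ normal form over a computably $G_\delta$ kernel.

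I would first normalize $\Psi$ to the form $\exists x_1\forall x_2\exists x_3\forall x_4\,[(x_0,\dots,x_4)\in \bigcup_m U_m^c]$ with $(U_m)$ uniformly computably open and nonempty. Nonemptiness can be forced by adjoining a harmless region and shifting indices. I would then read off the quantifier alternation directly.
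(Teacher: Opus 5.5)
Your proposal is correct and is essentially the paper's argument: fix $\Phi$ from \cref{thm:main-set-theory}, obtain $A,B,C$ once from \cref{cor:indic-half}, and evaluate $\sup_{x_0}x_0^2g(x_0)$ in $L[G]$. The one difference is that you encode $r$ itself and recover it at $x_0=\pm\sqrt r$ through the squared coordinates, whereas the paper encodes $\sqrt r$ and applies \cref{cor:indic-half} to $f_{A,B,C}$, in which the $x_k$ appear unsquared. Your variant is therefore the one that actually fits the $x_k^2$ written in the displayed statement.

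One correction: $\Rb_{\geq 0}$ is not homeomorphic to $\Rb$. However, you only need a computable continuous surjection $h\colon[0,\infty)\to\Rb$, such as $h(u)=u\sin u$. Then $\exists x\in\Rb\,\theta(x)\iff\exists u\geq 0\,\theta(h(u))$, and likewise for $\forall$, and pulling the computable $F_\sigma$ kernel back along $h$ keeps it computably $F_\sigma$.
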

\begin{proof}
      By \cref{thm:main-set-theory}, we have that there is a $\Sigma^1_4$ definition $X$ such that for any non-negative $r \in \Rb \cap L$, there is a forcing extension $L[G]$ such that $X^{L[G]} = \{\sqrt{r}\}$. By \cref{cor:indic-half}, we can find natural numbers $A$, $B$, and $C$ such that $\sup_{x_1 \in \Rb} \inf_{x_2 \in \Rb} \sup_{x_3 \in \Rb}\allowbreak\inf_{x_4 \in \Rb}\sup_{m \in \Nb}\inf_{n_0,\dots,n_{A} \in \Nb} f_{A,B,C}$ is equal to $\chi_{\{\sqrt{r}\}}(x_0)$ when evaluated in $L[G]$. This now implies that, in $L[G]$,
\[
  \sup_{x_0 \in \Rb}x_0^2 \sup_{x_1 \in \Rb} \inf_{x_2 \in \Rb} \sup_{x_3 \in \Rb}\inf_{x_4 \in \Rb}\sup_{m \in \Nb}\inf_{n_0,\dots,n_{A} \in \Nb} f_{A,B,C}
\]
is equal to $(\sqrt{r})^2 = r$.
Since $x_0^2 \geq 0$, this is also equal to
\[
\sup_{x_0,x_1 \in \Rb} \inf_{x_2 \in \Rb} \sup_{x_3 \in \Rb}\inf_{x_4 \in \Rb}\sup_{m \in \Nb}\allowbreak\inf_{n_0,\dots,n_{A} \in \Nb}x_0^2 f_{A,B,C},
\]
which is the formula in the statement of the theorem.
\end{proof}

Using a result of Harrington \citetext{\citealp[Ch.~25]{Miller2017}; \citealp{Harrington1977}}, it is possible to bring this down to a $\Sigma^1_3$ definition, giving a $\sup_\Rb\inf_\Rb\sup_\Rb\sup_\Nb\inf_\Nb$ formula, but the proof of this result is considerably more technical than the already fairly technical proof of \cite[Thm.~1]{Adamowicz1977}. Moreover, we felt that the proof in \cref{sec:encoding} made for a more accessible story. %

\section{What about \emph{any} real number?}
\label{sec:any-real}

Given what we have discussed so far, it is natural to wonder whether the formula in \cref{thm:main} can be made to take on any real numbered value at all. There's a sort of fundamental subtlety with this question, which is that what `any real number' means depends on the set-theoretic universe one is in, but also the result itself relies on passing from one set-theoretic universe to another. The formula can, in some sense, take on any real value, provided that the only real numbers that exist are those in $L$, but in order for it take on those values, it needs to be considered in the context of models of \ZFC{} which have strictly more real numbers than $L$. Given an $r \in \Rb \cap M \setminus L$ for one of these models $M$, is there yet another model of \ZFC{} in which the value of the formula is $r$? Again using \citetext{\citealp[Ch.~25]{Miller2017}; \citealp{Harrington1977}}, it is possible to extend to certain $r$ not in $L$ such that $L[r]$ is `not too much bigger than $L$,'\footnote{Specifically, the relevant condition is that $\omega_1^{L[r]} = \omega_1^L$ (i.e., that $L[r]$ and $L$ agree about what the first uncountable ordinal is).} but a precise characterization of which real numbers can be the unique elements of projectively definable singletons in general is open. At the other end of the spectrum, however, we can show that if $\Rb$ is `sufficiently big,' then the technique used in this paper cannot work for all reals. Specifically, we have the following fact, which we'll take as a black box for the moment.

\begin{fact}\label[fact]{fact:big-enough}
  In the presence of sufficiently large large cardinals,\footnote{It is enough to assume that $\Delta^1_2$-determinacy holds and that $r^\sharp$ exists for every real $r$.} there is a real number $x$ such that for any $y \geq_c x$, $L[x]$ and $L[y]$ have the same first-order theory \citetext{\citealp[Rem.~6.4]{Koellner2009}; \citealp{Kechris1983}}.
\end{fact}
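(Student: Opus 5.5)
The plan is to run Martin's cone argument one sentence at a time. For each sentence $\sigma$ in the language of set theory, let $A_\sigma = \{y \in 2^\omega : L[y] \models \sigma\}$. Since $L[y]$ depends only on the $c$-degree of $y$, each $A_\sigma$ is $\equiv_c$-invariant. Because $y \leq_T z$ implies $y \leq_c z$, each $A_\sigma$ is in particular Turing-invariant.

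The first step is to show that each $A_\sigma$ is determined. Play the Martin game in which players I and II alternate bits to build a real $y$, and I wins iff $y \in A_\sigma$. The steps are:

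1. Assuming the game is determined, fix a winning strategy $s$ (a real). Standard cone reasoning applies.
2. If $s$ is winning for I, then for every $z \geq_T s$, let II play $z$ against $s$. The resulting play $y$ satisfies $y \equiv_T z$, so $z \in A_\sigma$ by Turing-invariance. Hence $A_\sigma$ contains the Turing cone above $s$.
3. If $s$ is winning for II, the same argument shows $2^\omega \setminus A_\sigma$ contains the Turing cone above $s$.

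The second step is to combine the countably many sentences. Let $s_\sigma$ be the base real obtained for $\sigma$, and let $x$ be a real coding $\bigoplus_\sigma s_\sigma$. Now take any $y \geq_c x$. Then $x \in L[y]$, so $L[y \oplus x] = L[y]$. Since $y \oplus x \geq_T s_\sigma$ for every $\sigma$, the real $y \oplus x$ lies in whichever of $A_\sigma$ or its complement contains the cone above $s_\sigma$. By $\equiv_c$-invariance, $y$ lies there too. The same reasoning applies to $x$ itself. Therefore $L[y]$ and $L[x]$ satisfy exactly the same sentences.

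The main obstacle is the complexity calculation justifying determinacy of the games from the stated hypothesis. The first thing I would try is to use the existence of sharps. Truth in $L[y]$ is recursive in $y^\sharp$, and $\{(y,z) : z = y^\sharp\}$ is $\Pi^1_2$. Then both
\[
y \in A_\sigma \iff \exists z\,(z = y^\sharp \wedge \ulcorner\sigma\urcorner \in z)
\]
and
\[
y \in A_\sigma \iff \forall z\,(z = y^\sharp \rightarrow \ulcorner\sigma\urcorner \in z)
\]
hold, so $A_\sigma$ is $\Delta^1_3$.

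This naive bound overshoots $\Delta^1_2$-determinacy by one level. To close the gap I would reflect the truth of $\sigma$ down to a countable level. With $y^\sharp$ available, $L[y]$ and $L_{\iota_y}[y]$ have the same theory, where $\iota_y$ is the least $y$-indiscernible. I would then try to express membership in $A_\sigma$ via countable well-founded models and comparison of iterates, aiming to get the game payoff within the pointclass that $\Delta^1_2$-determinacy, in the refined form due to Kechris, actually handles. Failing that, I would simply assume $\Delta^1_3$-determinacy, which also follows from the large cardinals alluded to, and accept a slightly stronger hypothesis.
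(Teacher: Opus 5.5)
The paper gives no proof of this fact. It imports it as a black box from Koellner--Woodin and Kechris, so your argument has to stand on its own, and for the main clause it does. Martin's cone argument applied to each Turing-invariant set $A_\sigma=\{y : L[y]\models\sigma\}$ is correct. So is the next step: take $x$ to be a join of the countably many cone bases and use $L[y\oplus x]=L[y]$ for $y\ge_c x$. Your computation that $A_\sigma$ is $\Delta^1_3$ once every real has a sharp is also right. You have therefore proved the fact from $\Delta^1_3$-determinacy. That hypothesis follows from two Woodin cardinals with a measurable above them (Martin--Steel), and it already yields all sharps because it includes $\Pi^1_1$-determinacy. This covers ``sufficiently large large cardinals'' and everything the paper later uses the fact for.

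What you have not proved is the footnote's claim that $\Delta^1_2$-determinacy plus sharps suffices. The route you sketch for it, rewriting membership in $A_\sigma$ so that the payoff lands in $\Delta^1_2$, cannot work in general. Take $\sigma$ to be ``$0^\sharp$ exists.'' Since $z=0^\sharp$ is a $\Pi^1_2$ condition, Shoenfield absoluteness gives $A_\sigma=\{y : 0^\sharp\in L[y]\}$. This set is nonempty but contains no constructible real. By the Shoenfield basis theorem (every nonempty lightface $\Sigma^1_2$ set has a member in $L$), it is therefore not $\Sigma^1_2$.

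This is a property of the set $A_\sigma$ itself, not of the definition you choose for it. So reflecting to $L_{\iota_y}[y]$, or any other reformulation, cannot bring it into $\Delta^1_2$. This particular $A_\sigma$ does trivially contain the cone above $0^\sharp$. The point is only that ``show the payoff is $\Delta^1_2$ and apply Martin'' is not an available strategy. Reaching the footnote's hypothesis needs a genuinely different ingredient: either a different game, or a transfer principle showing that under $\Delta^1_2$-determinacy these specific Turing-invariant $\Delta^1_3$ sets are still decided on a cone. That is the part you would have to take from the cited sources.
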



$L[x]$ and $L[y]$ having the same first-order theory is much stronger than this, but the particular consequence of this that is relevant to this paper is that $L[x]$ and $L[y]$ agree about the values of all formulas like the one in our \cref{thm:main}. As such, we get the following:

\begin{prop}\label[prop]{prop:big-doesn't-work}
  If there is a real number $x$ with the property in \cref{fact:big-enough}, then there is a real number $s$ that cannot be the value of the formula in \cref{thm:main} in any model of \ZFC{} of the form $L[y]$.
\end{prop}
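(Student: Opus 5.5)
Fix a real $x$ with the property in \cref{fact:big-enough}, and let $F$ denote the formula in \cref{thm:main}. The first step is to observe that the value of $F$ in a model $N$ of \ZFC{} is determined by the first-order theory of $N$. Although $F$ is a real, it is an element of $N$, so we cannot directly ask for ``$F = t$'' as a sentence without naming $t$. What we can do is use the sentences $\sigma_{q} \equiv$ ``$F < q$'' and $\tau_q \equiv$ ``$F > q$'' for rational $q$. Each of these is a first-order sentence with no parameters, since rationals are definable and $F$ is given by an explicit expression in the language of set theory (possibly $F = +\infty$, which can also be expressed). In a well-founded (indeed $\omega$-standard) model, the Dedekind cut $\{q \in \Qb : N \models \sigma_q\}$ determines the true real value of $F^N$. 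So if $\mathrm{Th}(L[x]) = \mathrm{Th}(L[y])$, then $F^{L[x]} = F^{L[y]}$.

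The second step is the diagonalization. Let $t = F^{L[x]}$, and choose a real $s \ne t$, for instance $s = t+1$ (or $s = 0$ if $t = +\infty$), with $s$ non-negative if we want it to be a genuine candidate value. I then need to show that $F^{L[y]} \ne s$ for every $y$. Given any $y$, the pair $(x,y)$ is coded by a real $z = x \oplus y$. Then $z \geq_c x$, so $\mathrm{Th}(L[z]) = \mathrm{Th}(L[x])$ and hence $F^{L[z]} = t$.

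The main obstacle is relating $L[y]$ to $L[z]$. If $y \geq_c x$, we are done directly: $F^{L[y]} = t \ne s$. But an arbitrary $y$ need not be above $x$. To handle this, I would choose $s$ more cleverly so that it lies in no $L[y]$ with $y \not\geq_c x$. Concretely, I would take $s$ to encode $x$, for example $s$ a non-negative real that is $\equiv_c x$ (such as a real whose binary digits are those of $x$) and with $s \ne t$. This is possible because the $c$-degree of $x$ contains continuum many reals (e.g., $x$ shifted by rationals), and at most one of them equals $t$. Now if $F^{L[y]} = s$, then $s \in L[y]$, since $F^{L[y]}$ is computed inside $L[y]$ as an element of its reals. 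Hence $x \leq_c s \leq_c y$, so $y \geq_c x$, giving $F^{L[y]} = F^{L[x]} = t \ne s$, a contradiction.

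The only point needing care is the absoluteness claim that the value ``in $L[y]$'' is the real in $L[y]$ determined by the cut of rationals. This holds because $L[y]$ is a transitive model containing $\omega$ and the true rationals, so its reals are true reals and its order on them agrees with the true order.
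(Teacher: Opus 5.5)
Your argument is correct. It shares the paper's skeleton: the value of the formula in $L[y]$ is determined by $\mathrm{Th}(L[y])$. Once $x \leq_c s$, any $L[y]$ containing $s$ has $y \geq_c x$, so the value there is $t = F^{L[x]}$.

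The difference is how you ensure $s \neq t$. The paper takes $s = x \oplus z$ with $z$ a new real added by forcing over $L[x]$. Then $s \notin L[x]$, and $s \neq t$ holds automatically because $t \in L[x]$, with no need to know $t$. The price is that a real outside $L[x]$ must exist in the ambient universe. That takes an extra remark, e.g.\ $\Rb \cap L[x]$ is countable, or $x^\sharp \notin L[x]$, under the hypotheses footnoted in \cref{fact:big-enough}. It can fail under the bare hypothesis of the proposition: if $V = L$, then $x = 0$ has the property and there are no new reals.

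You instead stay inside the $c$-degree of $x$ and just avoid the single value $t$. This needs no forcing and yields $s$ in $V$ (indeed in $L[x]$) from the hypothesis alone. It in fact shows that every $s \geq_c x$ other than $t$ is excluded, a family containing the paper's. You also make explicit two steps the paper leaves implicit: the rational-cut argument that elementarily equivalent transitive models give the same value, and the reduction of arbitrary $y$ to $y \geq_c x$.

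Two small corrections. First, the $c$-degree of $x$ lies inside $\Rb \cap L[x]$, which is countable under those large-cardinal hypotheses, so ``continuum many'' is wrong in general. You only need two elements, and the rational shifts give infinitely many. Second, the first attempt with $s = t+1$ and $z = x \oplus y$ is never used and can be dropped.
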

\begin{proof}
  Consider $L[x]$ and force to add a new real number $z$. (It doesn't really matter what kind of forcing you use, as long as a new real number shows up.) Let $s = x \oplus z$ (i.e., the number that results from interleaving the digits of $x$ and $z$, which is the least upper bound of $\{x,z\}$ in the pre-order $\leq_c$). Now consider $y \geq_c x$. Assume for the sake of contradiction that the value of the formula in \cref{thm:main} is $s$. This implies that $s \in L[y]$. Since $L[x]$ and $L[y]$ have the same first-order theory, $s$ is the value of the formula in $L[x]$ as well. But this implies that $s$ and therefore $z$ are elements of $L[x]$, which is a contradiction.
\end{proof}

So that's why our method eventually fails. Once $x$ has large enough $c$-degree, the `definable behavior' of $L[x]$ stabilizes. But what can we say about this real number $x$? What is it? It is called $M_1^\sharp$ (read `$M$ one sharp'), and it's an instance of a major concept in modern set theory, that of a \emph{mouse}. In order to talk about mice, however, we first need to talk a little bit about large cardinals.

\subsection{What is a large cardinal?}
\label{sec:what-is-large}

\emph{Large cardinals} are a second major topic of modern set theory (in addition to inner models). Whereas inner models (and forcing) involve modifying models of set theory by changing their `width,' large cardinals have to do with the `height' (but also the width) of models of \ZFC{}. A large cardinal is a cardinal number $\kappa$ satisfying some strong infinitude property. For instance, a cardinal $\kappa$ satisfying that $V_\kappa$ is a model of \ZFC{} is called a \emph{worldly cardinal}. This is one of the weakest large cardinal properties, but it is somewhat representative in that most things that are referred to as large cardinal properties entail this.

One consequence of this is that it follows from a very basic incompleteness argument that \ZFC{} does not prove the existence of worldly cardinals. To see this, assume that there is a worldly cardinal $\kappa$. Without loss of generality, we may assume that $\kappa$ is the least worldly cardinal (which we can do since the cardinal numbers are well-ordered). Then $V_\kappa$ is a model of \ZFC{} in which there are no worldly cardinals.

This is an independence proof, but it has an important qualitative difference from something like the independence of the continuum hypothesis from \ZFC{}. With $\mathsf{CH}$, models that satisfy $\mathsf{CH}$ and models that satisfy $\neg \mathsf{CH}$ are able to `see each other.' For instance, given any model $M$ of \ZFC{}, $L$ computed inside $M$ will be a model of $\ZFC + \mathsf{CH}$. And, likewise, any model of \ZFC{} has a forcing extension which satisfies $\neg \mathsf{CH}$. This means that one can show in fairly weak metatheories that $\ZFC + \mathsf{CH}$ is a consistent theory (i.e., does not prove a contradiction) if and only if $\ZFC + \neg \mathsf{CH}$ is a consistent theory.

Large cardinals, by contrast, result in a `one-way' relationship. A model of $\ZFC + {}$``there exists a worldly cardinal'' is able to `see' a model of $\ZFC +{}$``there does not exist a worldly cardinal'', but the reverse is not true, as we discussed a moment ago. The first theory is fundamentally stronger than the second. It could be the case that $\ZFC+{}$``there exists a worldly cardinal'' is inconsistent while $\ZFC+{}$``there does not exist a worldly cardinal'' is not. This is analogous to G\"odel's second incompleteness theorem, but is arguably much more semantically natural. In particular, there is a wide variety of large cardinal axioms and they seem to fall in a natural hierarchy in which large cardinals of higher consistency strength are (often, but not always) profoundly larger in the sense of cardinality than large cardinals of lower consistency strength. For example, if there is a single \emph{inaccessible cardinal}, $\mu$, then the set $\{\kappa < \mu : \kappa~\text{is a worldly cardinal}\}$ has cardinality $\mu$.\footnote{This is an example of the \emph{reflection} phenomenon in set theory.}

Another reason for studying large cardinals is that they have a surprisingly strong relationship to descriptive set theory and the structure of $\Rb$, despite the fact that large cardinals are generally much, much larger objects than $\Rb$. Measurable cardinals were originally motivated by concepts from measure theory, but, arguably coincidentally, their existence also entails that $\Rb$ is more regular than it otherwise needs to be. Specifically, \ZFC{} can prove that $\Sigma^1_1$ sets are always Lebesgue measurable, but it cannot prove that $\Sigma^1_2$ sets are. By contrast, Solovay showed that $\ZFC+{}$``there exists a measurable cardinal'' \emph{does} prove that $\Sigma^1_2$ sets are always Lebesgue measurable \cite[Thm.~14.3]{Kanamori2003}, but Silver showed that it does not prove that every $\Sigma^1_3$ set is Lebesgue measurable \cite{Silver1971Meas}.

In \cref{prop:big-doesn't-work}, we assumed `enough' large cardinals to give an example of a real number $s$ which cannot be the value of $(\dagger)$. This real number $s$ has no nice ``snappy" characterization: it is achieved by interleaving the digits of a mysterious real $x$ and a generic real over $L[x]$. As it turns out, the real $x$, if it exists, also cannot be the value of $(\dagger)$. We believe it is possible to give an explanation of what this real number $x$ is, but unfortunately, the actual proof that it cannot be the value of $(\dagger)$ requires that the reader already have a background in descriptive inner model theory. Hence, we take the next section to attempt to explain what this real number $x$ is, and we give the proof that it cannot be the value of $(\dagger)$ in the Appendix.

\subsection{What is a mouse?}

\begin{figure}
  \centering
\includegraphics[width=0.9\textwidth]{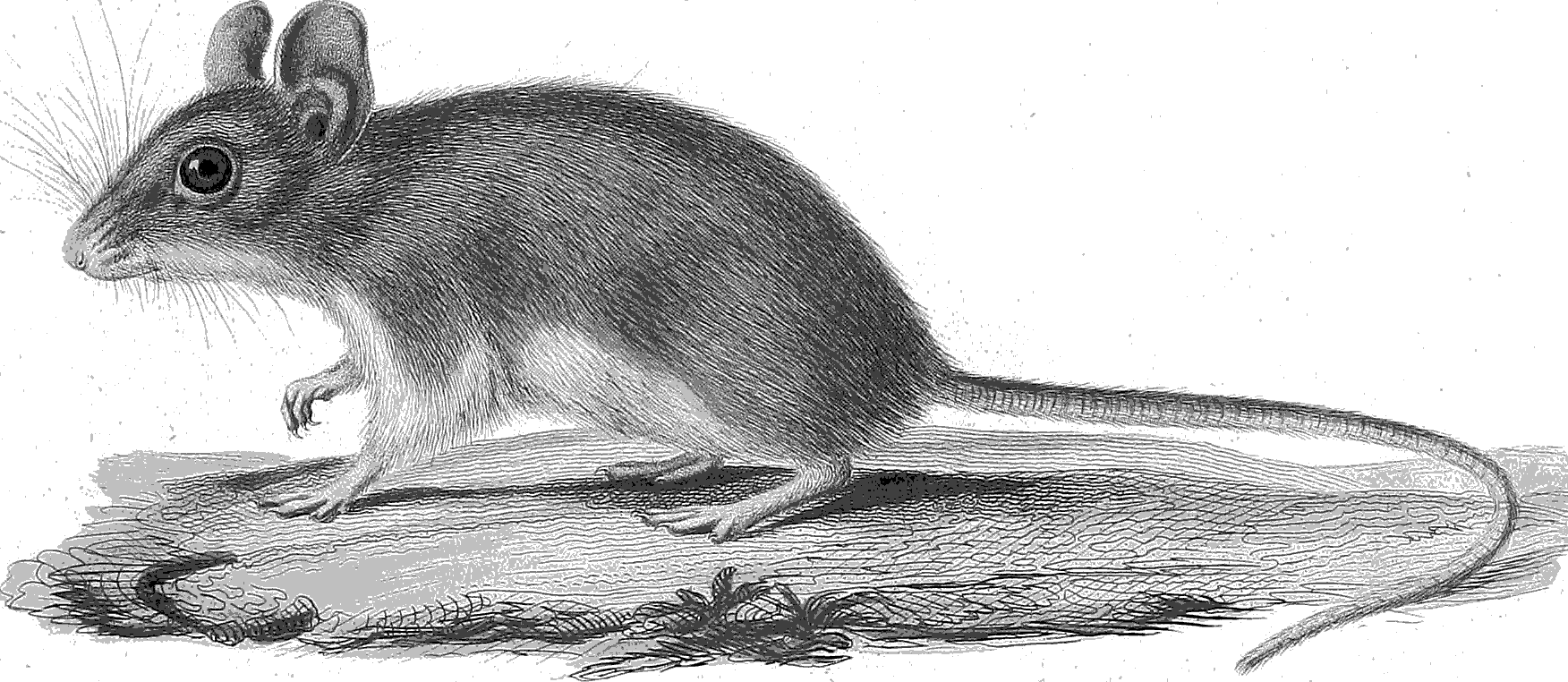}
\caption{A mouse \cite{bhlpage31059400}.}
\label{fig:mouse}
\end{figure}

Mice (see Figure~\ref{fig:mouse}) occur in the context of inner model theory, which we discussed briefly earlier in \cref{sec:encoding}. One of the primary motivations for inner model theory is the limitations of $L$ in the context of large cardinals. $L$ is incompatible with large cardinals of sufficiently high consistency strength. The first result in this direction was due to Dana Scott, who showed that $L$ is `too narrow' to accommodate measurable cardinals:






\begin{thm}[Scott \cite{SCOTT1961}]
  There cannot be a measurable cardinal in $L$.
\end{thm}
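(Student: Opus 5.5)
The plan is the classical ultrapower argument. Suppose for contradiction that $V = L$ and that $\kappa$ is measurable, so there is a non-principal $\kappa$-complete ultrafilter $U$ on $\kappa$. Form the ultrapower $\mathrm{Ult}(V,U)$ of the universe by $U$, using functions $f : \kappa \to V$ modulo $U$. By $\kappa$-completeness (in particular countable completeness), the ultrapower is well-founded. Its Mostowski collapse is a transitive class $M$, and we obtain an elementary embedding $j : V \to M$. The key facts to verify are: {\L}o\'s's theorem gives elementarity; $j(\alpha) = \alpha$ for $\alpha < \kappa$ by $\kappa$-completeness; and $j(\kappa) > \kappa$, witnessed by the identity function representing an ordinal $\geq \kappa$ but $< j(\kappa)$. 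So $\kappa$ is the critical point of $j$.

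Next we use absoluteness of $L$, which the paper has already emphasized. $M$ is a transitive proper class model of \ZFC{} containing all ordinals. Since $V = L$ holds in $V$, elementarity gives $M \models V = L$, so $M = L^M = L = V$. Thus $j : V \to V$ is an elementary embedding with critical point $\kappa$.

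To get the contradiction, we pick the least measurable cardinal rather than appealing to Kunen's inconsistency theorem. Take $\kappa$ least measurable from the start. By elementarity, $M \models$ ``$j(\kappa)$ is the least measurable cardinal.'' But $M = V$, so $j(\kappa)$ is the least measurable cardinal of $V$, namely $\kappa$. This contradicts $j(\kappa) > \kappa$.

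Alternatively, one can avoid relying on $M = V$ by observing that $U \in V = M$ while $M$ is closed under $\kappa$-sequences, which makes $\kappa$ measurable in $M$. Then in $M$ the least measurable $j(\kappa)$ would be at most $\kappa$, again a contradiction. The first route suffices here.

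The main obstacle is the well-foundedness of the ultrapower together with {\L}o\'s's theorem for a proper-class ultrapower, which needs Scott's trick to make the equivalence classes sets. I would handle this in the standard way: restrict each equivalence class to its members of minimal rank. For well-foundedness, a descending $\in$-sequence $[f_0] \ni [f_1] \ni \cdots$ gives sets $X_n \in U$, and their intersection, which lies in $U$ by countable completeness, yields an actual descending $\in$-sequence in $V$, which is impossible.

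Finally, to state the result as ``there is no measurable cardinal in $L$,'' I relativize. Apply the argument inside $L$: if $L \models$ ``$\kappa$ is measurable'', then since $L \models V=L$, the argument above run in $L$ gives a contradiction.
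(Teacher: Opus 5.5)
Your proof is correct. The paper gives no argument of its own here and only cites Scott, and your least-measurable ultrapower argument is Scott's original proof (the standard textbook one): $j : V \to M$ is well-founded by countable completeness, $M \models V = L$ gives $M = L^M = L = V$ by absoluteness of $L$, and then elementarity of \emph{least measurable cardinal} forces $j(\kappa) = \kappa$, contradicting $j(\kappa) > \kappa$.

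One cosmetic quibble: your \emph{alternative} paragraph does not actually avoid the identification $M = V$. What it needs is $U \in M$, which comes from $U \in L \subseteq M$, and that is the same absoluteness step.
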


In fact, among the most commonly studied large cardinals, measurables represent a phase transition from large cardinals that can exist in $L$ to large cardinals that are too big to live in $L$: Inaccessible and weakly compact cardinals can exist in $L$, but measurable, strong, Woodin, and supercompact cardinals cannot.

There is a fundamental tension between the tightly structured nature of $L$ and $L$-like models of \ZFC{} and the richness entailed by the existence of large cardinals. In regards to the main topic of this paper, the reals in $L$ are `very thin' but in the presence of large cardinals, $\Rb$ is `very full' (and particular has mice in it). Inner model theory looks to resolve this tension by creating canonical models (also known as \textit{core models}) of \ZFC{} which enjoy some of the properties of $L$ which are useful in doing set theory, yet are large enough to accommodate large cardinals.

Given a large cardinal property, then, how do we build a canonical model in which the large cardinal property holds? The naive way to do it would be to take a witness $X$ that a cardinal $\kappa$ has a certain large cardinal property, and to consider the model $L[X]$. This ends up being how inner model theorists build the core model, but the way they build $L[X]$ is wildly different from \cref{defn:L-X}.\footnote{The reasons \cref{defn:L-X} is not sufficient, and exactly what kind of organization is needed to run the core model construction, are the subject of a subfield of set theory called \textit{fine structure theory}.} $L[X]$ ends up being constructed with a very complicated inductive process, and it is essentially the partial constructions of this inductive process which are called mice.

Once we move past measurability in the large cardinal hierarchy, measures become insufficient to capture that property, and we need to resort to what are called \textit{coherent sequences of extenders}. What these exactly are is beyond the scope of this paper, but roughly an extender is a generalization of a measure (in the sense of measure theory) which allows us to capture cardinals of higher consistency strength than measurable cardinals, and a so-called `coherent' sequence of them is necessary to capture roughly the large cardinal strength of a real number existing as in \cref{fact:big-enough}, namely a \textit{Woodin cardinal}. While the precise definition of a Woodin cardinal is beyond the scope of this paper, they are far stronger than measurable cardinals, and appear throughout descriptive set theory.

Thus, to build a core model with a Woodin cardinal, one must build a model of the form $L[\vec{E}]$, where $\vec{E}$ is a coherent sequence of extenders, but organized in a far more complicated way than we have done in \cref{defn:L-X} so that we may ensure that initial segments of this construction `play nicely' with one another.

With this, we make a pseudodefinition: 
\begin{pseudodefinition}
    A \textit{mouse} is a structure of the form\footnote{Strictly speaking mice are defined in terms of a modification of the $L$-hierarchy called the \emph{$J$-hierarchy}, but in the interest of simplicity, we are glossing over this.} $(L_\alpha[\vec{E}],\in,\vec{E})$ with $\vec{E}$ a coherent sequence of extenders,\footnote{In the structure $(L_\alpha[\vec{E}],\in,\vec{E})$, $\vec{E}$ is represented by a unary predicate satisfied by pairs $(\alpha,E_\alpha)$, where $E_\alpha$ is the extender in $\vec{E}$ with index $\alpha$.} which is an initial segment of the inductive core model construction.\footnote{There are other technical requirements on mice, such as \textit{soundness} and \textit{iterability}, which are far beyond the scope of this paper.} 
\end{pseudodefinition}

The mice may be well-ordered by the order in which they are built in the core model construction, which allows us to refer to the ``minimal mouse'' in which something happens.

\begin{definition}
    $M_1$ is the minimal mouse such that $M_1$ satisfies ``There exists a Woodin cardinal.''\footnote{An important thing to note with statements like this is that this doesn't mean $M_1$ literally contains a Woodin cardinal. It merely means that it contains an object that looks like a Woodin cardinal as far as $M_1$. This is similar to Skolem's paradox, the fact that countable models of \ZFC{} can contain sets that the model `thinks' are uncountable.}
\end{definition}

We may now form an object\footnote{We would like to remind the reader that doing this necessarily requires us to jump in consistency strength. Of course, it is consistent with \ZFC{} that $M_1$ exists but $M_1^\sharp$ does not, so when we say ``we may form an object $M_1^\sharp$, we assume that the necessary machinery exists for us to do so.} $M_1^\sharp$ which has the same relationship to $M_1$ as $0^\sharp$ does to $L$. The exact way in which they relate is beyond the scope of this paper.\footnote{For example, it is not equivalent to the existence of indiscernibles for $M_1$.} To state exactly the way in which this analogy holds would require us to recast $0^\sharp$ in terms of mice. With this in mind, we make a second pseudodefinition:

\begin{pseudodefinition}
    $M_1^\sharp$ is the minimal active\footnote{The definition of \textit{active} is to ensure that $M_1^\sharp$ has the same relationship to $M_1$ as $0^\sharp$ does to $L$. The precise meaning of this is beyond the scope of the paper.} mouse such that $M_1^\sharp$ satisfies ``There exist a Woodin cardinal.''
\end{pseudodefinition}

Officially, since $M_1^\sharp$ is a mouse, it is not a real number, but a structure. However, its theory may be coded by a real number---for instance, we could take the number whose $n$th bit is $1$ if and only if $M_1^\sharp$ satisfies the sentence coded by $n$---and this real number is the real number $x$ as in \cref{fact:big-enough} and \cref{prop:big-doesn't-work}.


\appendix

\section{$M_1^\sharp$ cannot be its value}

In \cref{prop:big-doesn't-work}, we assume enough large cardinals to give an example of a real number $s$ which cannot be the value of $(\dagger)$. This real number $s$ has no nice ``snappy" characterization: it is achieved by interleaving the digits of $M_1^\sharp$ and a generic real over $L[M_1^\sharp]$. In this section, we prove that $M_1^\sharp$ itself cannot be the value of $(\dagger)$.

\begin{theorem}
    Suppose $\mathsf{ZF+DC_\mathbb{R}+AD}$. Then for every real $x$ with $x\ge_T M_1^\sharp$, $\mathbb{R}\cap\mathsf{HOD}^{L[x]}=\mathbb{R}^{M_1}.$
\end{theorem}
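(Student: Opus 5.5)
This is the Steel--Woodin theorem that $\mathsf{HOD}$ of $L[x]$ on a cone has the reals of $M_1$. I would not reprove it from scratch. Instead I would reduce it to two standard facts from descriptive inner model theory and check that their hypotheses are met.

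The first fact is Steel's computation under $\mathsf{AD}$: $\mathbb{R}\cap\mathsf{HOD}^{L[x]}=\mathbb{R}\cap M_\omega$ is false on a cone, but $\mathbb{R}\cap \mathsf{OD}^{L[x]}$ equals the set $C_2$ of reals that are $\Delta^1_2$ in a countable ordinal. The second fact is that $C_2=\mathbb{R}\cap M_1$, the largest countable $\Sigma^1_2$ set in the sense of the $L[x]$-analysis relativized. The plan proceeds in three steps.

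\emph{Step 1: the inclusion $\mathbb{R}^{M_1}\subseteq\mathsf{HOD}^{L[x]}$.} Since $x\ge_T M_1^\sharp$, $M_1^\sharp\in L[x]$. Inside $L[x]$ one iterates the top measure of $M_1^\sharp$ out through the ordinals. The resulting proper class model has the same reals as $M_1$, and since $M_1^\sharp$ is a sound mouse projecting to $\omega$, the relevant countable initial segments are definable in $L[x]$ from ordinals alone. The key observation is that $M_1^\sharp$ is itself ordinal definable in $L[x]$, as the unique $\omega_1$-iterable sound active mouse of its kind. $\mathsf{AD}$ together with $\mathsf{DC}_\mathbb{R}$ is what gives the iterability needed for $L[x]$ to certify this. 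I would argue that $\omega_1$-iterability for such countable mice is $\Pi^1_2$ and hence absolute between $V$ and $L[x]$ by Shoenfield absoluteness. Each real of $M_1$ is then definable in $L[x]$ from $M_1^\sharp$ and an ordinal, so it lies in $\mathsf{HOD}^{L[x]}$.

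\emph{Step 2: the reverse inclusion.} Let $y\in\mathbb{R}\cap\mathsf{HOD}^{L[x]}$. I would use the Steel--Woodin analysis, whose core is a genericity iteration. Working in $V$, compare $x$ against $M_1$: by Woodin's genericity iteration, some countable iterate $\mathcal{N}$ of $M_1$ has $x$ generic over the extender algebra at the image $\delta$ of the Woodin cardinal. Then $L[\mathcal{N}|\delta][x]$ and $L[x]$ agree enough that $\mathsf{HOD}^{L[x]}$ is computed inside $\mathcal{N}[x]$ by a homogeneity argument. I would arrange that $L[x]$ is the derived model of an $\mathcal{N}$-like model, so that its $\mathsf{OD}$ reals are fixed by all automorphisms of the collapse forcing. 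By homogeneity of the forcing, $y\in\mathcal{N}$, and the iteration map does not move reals, so $y\in M_1$.

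\emph{Step 3: uniformity in $x$.} I would check that the identification holds for every $x\ge_T M_1^\sharp$, not merely on some cone. Everything above used only that $M_1^\sharp\in L[x]$, so this step should be routine.

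I expect Step 2 to be the main obstacle. There, the identification of $\mathsf{HOD}^{L[x]}$ with the $\mathsf{HOD}$ of a symmetric collapse over an iterate of $M_1$ is exactly the deep part of the Steel--Woodin paper. In practice I would cite it (Steel, ``$\mathsf{HOD}^{L(\mathbb{R})}$ is a core model below $\Theta$'', together with Steel--Woodin on $\mathsf{HOD}$ of $L[x]$) rather than reproduce it, and verify only that $\mathsf{AD}$ supplies the required iterability of $M_1^\sharp$.
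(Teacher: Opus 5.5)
Step~2 is where your proposal breaks down. You fix $x$, make $x$ itself generic over an iterate $\mathcal{N}$ of $M_1$, and then appeal to homogeneity. But a real $y\in\mathsf{HOD}^{L[x]}$ is definable in $\mathcal{N}[x]$, or in any collapse extension absorbing $x$, only from ordinals \emph{together with} $x$: it is ``the real defined in $L[x]$ by $\phi$ from $\xi$''. And $x$ is the generic object. Homogeneity of $\mathrm{Coll}(\omega,\delta)$ only shows that reals definable in the extension from ground-model parameters lie in the ground model. Nor can $L[x]$ be ``arranged'' to be a derived model over $\mathcal{N}$, since derived models satisfy $\mathsf{AD}$ while $L[x]\models\mathsf{ZFC}$.

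The missing idea is to remove the dependence on $x$ first, and this is where the paper genuinely uses $\mathsf{AD}$ (you use it only for iterability). The map sending $x$ to the $\alpha$th real of $\mathsf{HOD}^{L[x]}$ is Turing invariant. So by Martin's cone theorem it is constant, with value $r(\alpha)$, on a cone with base $b$, and $r(\alpha)$ is characterized by a formula $\varphi_\alpha$ whose only parameter is $\alpha$. One then genericity-iterates $M_1^\sharp$ to $N$ with $b\in N[g]$ for some $N$-generic $g\subseteq\mathrm{Coll}(\omega,\delta^N)$. In $N[g]$, $r(\alpha)$ is ordinal definable via $\varphi_\alpha$, so homogeneity gives $r(\alpha)\in N$, and $\mathbb{R}\cap N=\mathbb{R}\cap M_1$. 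This controls only the cone-stable values $r(\alpha)$. Hence your Step~3 claim, that uniformity over every $x\geq_T M_1^\sharp$ is routine, is unsupported: nothing in your Step~2 handles reals ordinal definable in a particular $L[x]$ by definitions that really use $x$.

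Step~1's ``key observation'' is incompatible with the statement you are proving. If $M_1^\sharp$ were ordinal definable in $L[x]$, it would be a real of $\mathsf{HOD}^{L[x]}$. But $M_1^\sharp\notin M_1$, for the same reason that $0^\sharp\notin L$. The paper uses the theorem precisely to conclude that $M_1^\sharp\notin\mathsf{HOD}^{L[x]}$. The error is your assertion that iterability for ``such mice'' is $\Pi^1_2$. $M_1^\sharp$ is not $1$-small, and it is not even a $\Pi^1_2$ singleton, since those are $\Delta^1_3$ and hence lie in the $\Sigma^1_3$-correct model $M_1$. So Shoenfield absoluteness cannot let $L[x]$ single it out.

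The paper never defines $M_1^\sharp$. It uses the initial segments $M_1|\alpha$ with $\alpha<\omega_1^{M_1}$ that project to $\omega$. These are $1$-small, so by Steel each is the unique $\Pi^1_2$-iterable premouse of height $\alpha$ projecting to $\omega$. Shoenfield transfers this characterization into $L[x]$, so $M_1|\alpha$ is definable there from $\alpha$. The hypothesis $x\geq_T M_1^\sharp$ is used only to guarantee that codes for these segments lie in $L[x]$.

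Finally, both background facts you quote are wrong. $C_2$, the set of reals $\Delta^1_2$ in a countable ordinal, is $\mathbb{R}\cap L$, which omits $0^\sharp$. Yet $0^\sharp\in M_1$, and $0^\sharp$ is ordinal definable in every $L[x]$ containing it, being a $\Pi^1_2$ singleton. The correct set is $Q_3=\mathbb{R}\cap M_1$.
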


\begin{proof}
    First, we show that $\mathbb{R}\cap\mathsf{HOD}^{L[x]}$ stabilizes on a cone at all. Let $\prec_x$ denote the canonical well-ordering of $\mathsf{HOD}^{L[x]}$ as computed in $L[x]$, and let $\prec_x^{\mathbb{R}}$ be its (degree-invariant) restriction to the reals of $\mathsf{HOD}^{L[x]}.$ Furthermore, let $r_x(\alpha)$ denote the $\alpha^{\text{th}}$ real of $\mathsf{HOD}^{L[x]}$, if it exists. For each $\alpha<\omega_1^V$, there is a cone $C_\alpha$ and a real $r(\alpha)$ such that for all $x\in C_\alpha,$ $r_x(\alpha)=r(\alpha)$. Fix such an $\alpha$ and such a real $r=r(\alpha)$ such that $$\exists b\forall x\ge_T b(r=r_x(\alpha)).$$ Denote this formula by $\varphi_\alpha(r).$

    Next, we show that actually $\mathbb{R}\cap\mathsf{HOD}^{L[x]}=\mathbb{R}^{M_1}$ on the cone above $M_1^\sharp$. For one direction of the inclusion, we wish to show that if $\alpha<\omega_1^V$ and $r\in\mathbb{R}$ satisfies $\varphi_\alpha(r)$, then $r\in\mathbb{R}^{M_1}.$ Let $b$ be a base witnessing $\varphi_\alpha(r)$. By Woodin's genericity iteration theorem, there is a normal iteration map $\pi: M_1^\sharp\to N$ where $N$ is an iterate of $M_1^\sharp$ such that for some $g\subseteq\mathrm{Coll}(\omega,\delta^N)$ which is $N$-generic, we have $b\in N[g].$ Work in $N[g]$. Consider the formula $\Phi_\alpha(y)$ expressing that $y$ is the unique real such that $\varphi_\alpha(y)$ holds. This is a first-order formula in $V$ with parameter $\alpha$ only. Because $\varphi_\alpha(r)$ is true in $V$, $\Phi_\alpha(r)$ is true in $V$. Because $N[g]$ is the sharp for a model of the form $L[V_\delta^N][g]$, we have that $N[g]\vDash\Phi_\alpha(y)"$, and that unique real number must be $r$. Hence, $r\in\mathrm{OD}^{N[g]}.$ Since the forcing $\mathrm{Coll}(\omega,\delta^N)$ is weakly homogenous over $N$, then since $r\in\mathrm{OD}^{N[g]}$, we have that $r\in N.$ Finally, note that $N\cap\mathbb{R}=M_1^\sharp\cap\mathbb{R}=M_1\cap\mathbb{R}$, so $r\in M_1.$
    
    For the other direction, we wish to show that if $x\in\mathbb{R}$ with $x\ge_T M_1^\sharp$, then $\mathbb{R}^{M_1}\subseteq\mathbb{R}\cap\mathsf{HOD}^{L[x]}$. Fix $x\geq_T M_1^\sharp.$ Let $\alpha<\omega_1^{M_1}$ be such that $M_1\upharpoonright\alpha$ projects to $\omega$. By \cite{Steel1995}, for such an $\alpha$, $M_1\upharpoonright\alpha$ is the unique premouse $N$ of height $\alpha$ which projects to $\omega$ and is $\Pi^1_2$-iterable. Let $\Psi(\alpha)$ be the statement $$``N\text{ is the unique premouse of height $\alpha$ which projects to }\omega\text{ and }N\text{ is }\Pi^1_2\text{-iterable}".$$ The statement $``N$ is $\Pi^1_2$-iterable" is uniformly $\Pi^1_2$ in the real coding $N$. By Shoenfield absoluteness, this statement is absolute between $V$ and $L[x]$. Since $x\ge_T M_1^\sharp$, $L[x]$ contains a code for $M_1^\sharp$, hence it contains codes for the relevant countable initial segments $M_1\upharpoonright\alpha$ with projectum $\omega.$ In particular, $L[x]$ contains some code witnessing $\Psi(\alpha)$, and Shoenfield absoluteness ensures $L[x]$ agrees with $V$ about $\Pi^1_2$-iterability of those codes. Therefore, inside $L[x]$, $M_1\upharpoonright\alpha$ is defined by the formula $\Psi(\alpha)$. Hence, $M_1\upharpoonright\alpha\in\mathsf{HOD}^{L[x]}$. A standard fine-structural fact states that the reals of $M_1$ appear exactly at those stages $\alpha$ where $M_1\upharpoonright\alpha$ projects to $\omega$. Thus,  every $y\in\mathbb{R}^{M_1}$ belongs to some $M_1\upharpoonright\alpha$ with projectum $\omega,$ and so $r\in\mathsf{HOD}^{L[x]}.$
\end{proof}
Thus, if it exists, $M_1^\sharp$ will not be a lightface projective singleton of $L[x]$ for any real $x$, as if it were, it would also be in $\mathsf{HOD}^{L[x]}$, and hence it cannot be the value of $(\dagger)$.

\bibliographystyle{vancouver}
\bibliography{ref}

\end{document}